%Typeset with LaTeX2e format file

%Preamble
%Style section
%\documentclass[10pt]{amsart}
\documentclass{elsart}
% Use the option doublespacing or reviewcopy to obtain double line spacing$$$$
% \documentclass[doublespacing]{elsart}
% if you use PostScript figures in your article
% use the graphics package for simple commands
% \usepackage{graphics}
% or use the graphicx package for more complicated commands
% \usepackage{graphicx}
% or use the epsfig package if you prefer to use the old commands_{$$}
% \usepackage{epsfig}

% The amssymb package provides various useful mathematical symbols
\usepackage{amssymb}
\usepackage{latexsym}
\usepackage{amsmath}
\usepackage{amsfonts}
\usepackage{graphicx}
\usepackage{subfig}
\usepackage{color}
\usepackage{times}

\usepackage[latin1]{inputenc}
\usepackage[spanish]{babel}

\newtheorem{definition}{\bf Definition}[section]
\newtheorem{Lem}[definition]{\bf Lemma}
\newtheorem{Obs}[definition]{\bf Observation}
\newtheorem{Thm}[definition]{\bf Theorem}
\newtheorem{Prop}[definition]{\bf Proposition}
\newtheorem{Cor}[definition]{\bf Corollary}

\newtheorem{Rem}[definition]{\bf Remark}
\newenvironment{proof}{\noindent \textit{Proof. }}{\hfill\hfill\qed}

\def\Q#1{{\operatorname{Q}(#1)}}
\def\S#1{{\operatorname{S}(#1)}}

\begin{document}

\begin{frontmatter}

\title{The differential on Graph Operator $\S{G}$.}

\author{Gerardo Reyna}
\address{Faculty of Mathematics, Autonomous University of Guerrero.
	Carlos E. Adame 5, Col. La Garita, Acapulco, Guerrero, Mexico}
\ead{gerardoreynah@hotmail.com }

\author{Jair Castro Simon}
\address{Faculty of Mathematics, Autonomous University of Guerrero.
Carlos E. Adame 5, Col. La Garita, Acapulco, Guerrero, Mexico}
\ead{castrosimonjair@gmail.com}

\author{Omar Rosario}
\address{Faculty of Mathematics, Autonomous University of Guerrero.
	Carlos E. Adame 5, Col. La Garita, Acapulco, Guerrero, Mexico}
\ead{omarrosarioc@gmail.com}

%\thanks{Partially supported by Ministerio de Ciencia y Tecnolog\'ia,
%ref. BFM2003-00034 and Junta de Andaluc\'ia, ref. FQM-260 and ref.
%P06-FQM-02225.}

\date{\today}
\maketitle

\begin{abstract}
Let $G=(V(G),E(G))$ be a simple graph with vertex set $V(G)$ and edge set
$E(G)$. Let $S$ be a subset of $V(G)$, and let $B(S)$ be the set of
neighbours of $S$ in $V(G) \setminus S$. The  differential
$\partial(S)$ of $S$ is defined as $|B(S)|-|S|$. The maximum value
of $\partial(S)$ taken over all subsets $S\subseteq V$ is the 
differential $\partial(G)$ of $G$. A graph operator is a mapping $F: G\rightarrow G'$, 
where $G$ and $G'$ are families of graphs.
The graph $\S{G}$ is defined as the graph obtained from $G$ con bipartici\'on de v\'ertices $V(G)\cup E(G)$, donde hay tantas aristas entre $v \in V(G)$ y $e \in E(G)$, como veces $e$ sea incidente con $v$ en $G$.   
In this paper we study the relationship between $\partial(G)$ and $\partial(\S{G})$. Besides, we relate the differential of a graph with known parameters of a graph, namely, its domination  and independence number.
%We also exhibit some results that relate $\partial(\Q{G})$ to some known parameters in graphs.
%las relaciones entre $\partial(G)$ y $\partial(Q(G))$. Adem\'as, obtenemos algunos resultados que
%relacionan a $\partial(Q(G))$ con algunos par\'ametros conocidos en gr\'aficas como son el n\'umero de cubierta
%por v\'ertices, n\'umero de independencia y n\'umero de dominaci\'on. 
\end{abstract}

\end{frontmatter}
\vspace{0.1cm} \small{ {\it Keywords:}  Differential of a graph;
Operators Graphs; Differential.

{\it AMS Subject Classification numbers:}   05C69;  05C76}

\section{Introduction}
Social networks, such as Facebook or Twitter, have served as an
important medium for communication and information disseminating. As
a result of their massive popularity, social networks now have a wide variety
of applications in the viral marketing of products and political
campaigns. Motivated by its numerous applications, some
authors \cite{KeKlTa1,KeKlTa2} have proposed several influential maximization problems,
 which share a fundamental algorithmic problem for information diffusion in social networks: the problem of
 determining the best group of nodes to influence the rest.
As it was showed in \cite{BeFe}, the study of  the differential of a
graph $G$, could be motivated from such scenarios.

The study of $\partial(G)$ together with a variety of other kinds of
differentials of a set, started in \cite{MaHaHeHeSl}. In particular,
several bounds for $\partial(G)$ were given. The differential of a
graph has also been investigated in
\cite{BaBeSi,Be,BeDeMaSi,BeFe,BeFe2,BeFe3,BeFeSi,BeRoSi,RoYo,Si},
and it was proved in \cite{BeFeSi} that $\partial(G)+\gamma_R(G)=n$,
where $n$ is the order of the graph $G$ and $\gamma_R(G)$ is the
Roman domination number of $G$, so every bound for the differential
of a graph can be used to get a bound for the Roman domination
number. The differential of a set $D$ was also considered in
\cite{GoHe}, where it was denoted by $\eta(D)$, and the minimum
differential of an independent set was considered in \cite{Zh}. The
case of the $\beta$-differential of a graph or enclaveless number,
defined as $\psi(G):= \max\{ |B(D)|:D \subseteq V(G)\}$, was studied in
\cite{BaBeLeSi,Sl1}. As usual, we will denote by $E_n$ the graph of order $n$ with no edges.

Throughout this paper, $G=(V(G),E(G))$ is a simple graph of order $n\geq 2$ with
vertex set $V(G)$ and edge set $E(G)$. Let $u$ and $v$ be two distinct vertices of $V(G)$, and let $S$ be a
subset of $V(G)$. As usual, $N(v)$ is the set of neighbours that $v$ has
in $V(G)$, and $N[v]$ is the closed neighbourhood of $v$, i.e., $N[v] := N(v)\cup \{v\}$. 
We denote by $\delta(v) := |N(v)|$ the degree of $v$ in $G$, and by $\delta(G)$ and $\Delta(G)$ the
 minimum and the maximum degree of $G$, respectively.
The subgraph of $G$ induced by $S$ will be denoted by $G[S]$, and
the complement of $S$ in $V(G)$ by $\overline{S}$. Then
$N_{\overline{S}}(v)$ is the set of neighbours that $v$ has in
$\overline{S}=V \setminus S$.  We let $N(S):=\displaystyle{\bigcup_{v\in S}}N(v)$
and $N[S]:=N(S)\cup S$. An \emph{external private neighbour of} $v\in S$
{\em with respect to} $S$ is a vertex $w\in N(v)\cap \overline{S}$ such
that $w\notin N(u)$ for every $u\in S\setminus\{v\}$. The set of all
external private neighbours of $v$ with respect to $S$ is denoted by epn$[v,S]$.
 Let $B(S)$ be the set of vertices in $\overline{S}$ that have a neighbour in $S$, and
let $C(S)$ be the set $\overline{S\cup B(S)}$. Then $\{S,B(S),C(S)\}$ son conjuntos disjuntos 
entre s\'i tales que $V(G)=S\cup B(S)\cup C(S)$. The {\em differential of a set} $S$ is defined as 
$\partial(S)=|B(S)|-|S|$ and the {\em differential of a graph} $G$ is 
defined as $\partial(G)=max \{ \partial(S): S\subseteq V \}$. We will 
say that $S \subseteq V$ is a {\em differential set} if $\partial(S)=\partial(G)$, $S$ 
is a \emph{minimum (maximum) differential set} if it has minimum (maximum) cardinality 
among all differential sets.

Notice that if $G$ is disconnected, and $G_1,\ldots ,G_k$ are its 
connected components, then $\partial(G)=\partial(G_1)+\cdot\cdot\cdot+\partial(G_k)$. 
In view of this, from now on we only consider connected graphs.

%Now we define the following related graphs:

%{\it Subdivision Graph:} $\S{G}$ is the graph constructed from G substituting each of its edges by a path of length 2. \\
We recall that a {\em vertex cover} of a graph $G$  is a subset $C\subseteq V(G)$ such that every edge of $G$ has at least one end vertex in $C$.  The  {\em vertex-covering number} of $G$ is the size of any smallest vertex cover in $G$ and is denoted by $\tau(G)$. A subset  $D\subseteq V(G)$ is a {\em dominating set} of $G$ if each vertex in $V(G)$ is in $D$ or is adjacent to a vertex in $D$. The {\em domination number} of $G$ is the minimum cardinality of a dominating set of $G$ and is denoted by $\gamma(G)$. A subset  $I\subseteq V(G)$ is an {\em independent set} of $G$ if each any two distinct vertices in $I$ are not adjacent in $G$. The {\em independence number} of $G$ is the maximum cardinality of an independent set of $G$ and is denoted by $\alpha(G)$. A subset $M \subseteq E(G)$ is a {\em matching} de $G$ if each any two distinct edge in $M$ are not incidents in $G$. The {\em matching number} of $G$ is the maximum cardinality of a matching of $G$ and is denoted by $\beta(G)$. Finally, a {\em edge cover} of $G$ is a subset $A \subseteq E(G)$ such that every vertex of $G$ is incident to at least one edge of the set
$A$. The {\em edge-covering number} of $G$ is the size of a minimum edge covering in $G$ and is denoted by $\rho(G)$.

The operator $\S{G}$ (following the notations \cite{CvDoSa}) is defined as the graph obtained from $G$ 
con bipartici\'on de v\'ertices $V(G)\cup E(G)$, donde hay tantas aristas entre $v \in V(G)$ y $e \in E(G)$, como veces $e$ sea incidente con $v$ en $G$.(see Figure \ref{Fig:S(G)}). 

\begin{figure}
\begin{center}
\includegraphics[width=10cm]{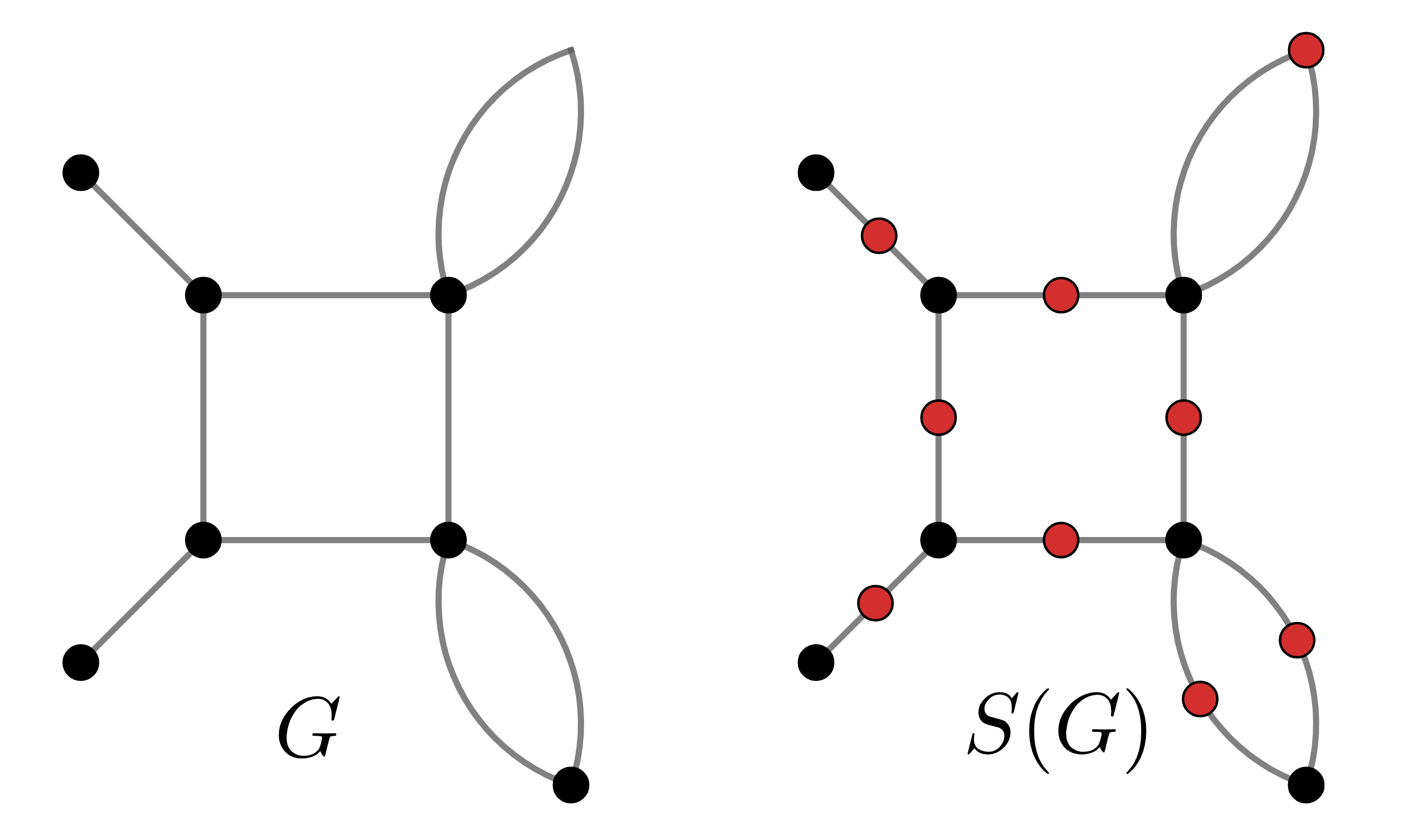} 
\end{center}
\caption{En la izquierda la gr\'afica $G$ tiene un lazo y aristas m\'ultiples, en la derecha la representaci\'on de la gr\'afica subdivisi\'on $\S{G}$.}
\label{Fig:S(G)}
\end{figure}

Para gr\'aficas simples se puede dar un definici\'on de la operaci\'on subdivisi\'on $\S{G}$ como sigue: \emph{La gr\'afica \textbf{subdivisi\'on} $\S{G}$ es obtenida de $G$ insertando un v\'ertice adicional en cada arista de $G$.} (see Figure \ref{Fig:Q(G)}). 

\begin{figure}
\begin{center}
\includegraphics[width=10cm]{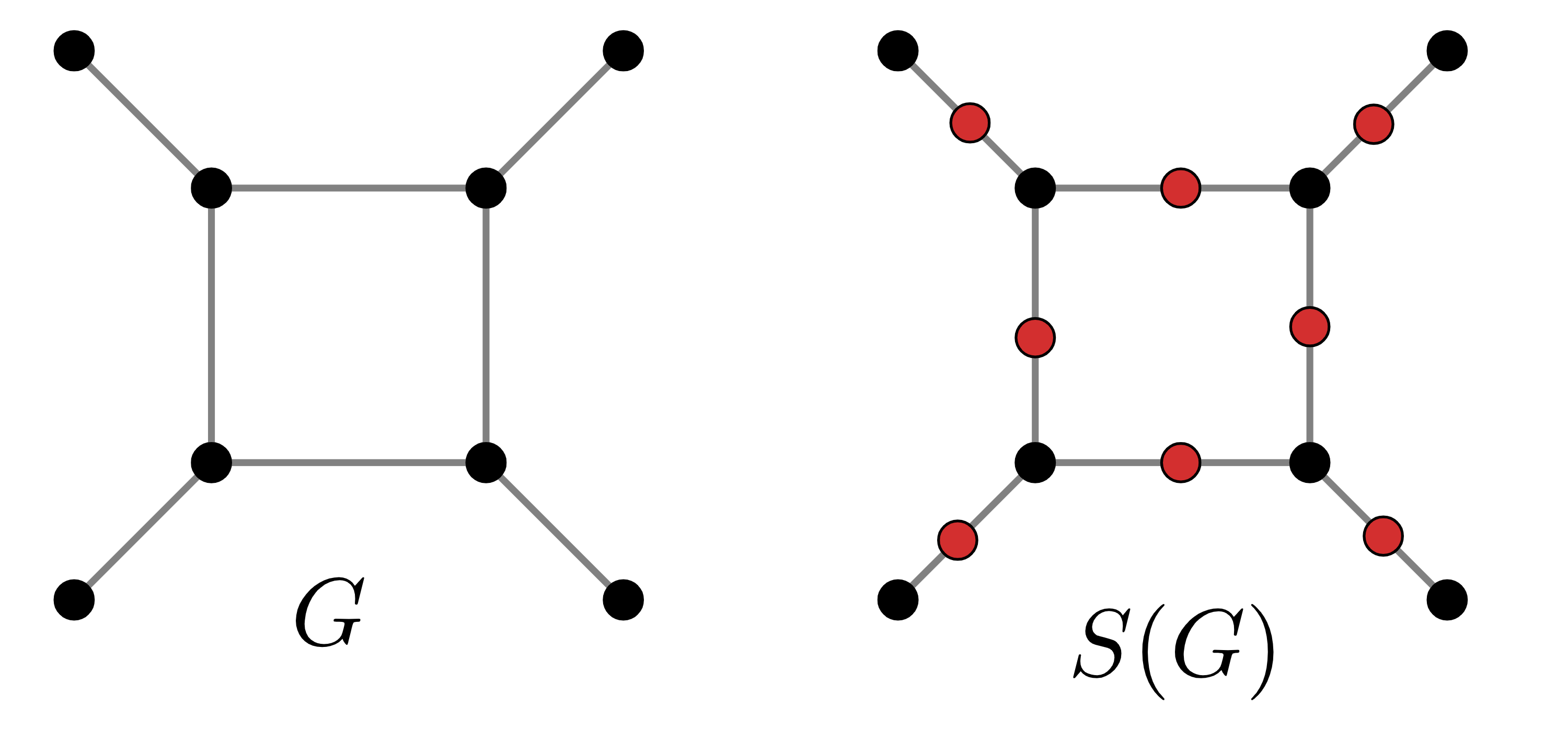} 
\end{center}
\caption{The graph on the right is the corresponding $\S{G}$ of the graph $G$ on the left.}
\label{Fig:Q(G)}
\end{figure}

\section{The differential of $\S{G}$.}

We start by listing some basic properties of $\S{G}$, which can be deduced easily from the
definition of $\S{G}$. We use $E_n$ to denote the graph of order $n \geq 2$ with no edges.

\begin{Rem}\label{rem-Q(G)}
\begin{itemize}
\item[i)]Al conjunto de los v\'ertices de $\S{G}$ correspondientes a las aristas de $G$ los vamos a denotar por $S$, y al resto por $V$, adem\'as $|V|=|V(G)|$ y $|S|=|E(G)|$.
\item[ii)]Los conjuntos $S$ y $V$ son conjuntos independientes.
\end{itemize}
\end{Rem}

\begin{Prop}\label{s1} 
Let $G$ be a graph of order $n$, then
\noindent
\begin{enumerate}
\item[i)] $|E(\S{G})|=2 |S|$,
\item[ii)] $G\cong \S{G}$ if and only if  $G\cong E_n$,
\item[iii)] $\delta(w)=2$, for all $w \in S$,
%\item[iii)] $V(G) \subseteq V(\S{G})$.
\item[iv)] $\delta_{\S{G}}(v)=\delta_G(v)$, for all $v \in V$.
\end{enumerate} 
\end{Prop}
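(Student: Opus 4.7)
The plan is to verify each of the four items directly from the definition of $\S{G}$ given in the introduction, using the partition $V(\S{G}) = V \cup S$ described in Remark~\ref{rem-Q(G)}, where the vertices in $S$ correspond bijectively to the edges of $G$ and the vertices in $V$ correspond bijectively to $V(G)$.

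For part (i), I would argue by a handshake-style count, but it is cleaner to exploit part (iii). Each vertex $w \in S$ corresponds to an edge $e$ of $G$; by definition of $\S{G}$, the multiplicity of edges joining $w$ to an endpoint $v$ of $e$ equals the number of times $e$ is incident with $v$ in $G$. Since every edge of $G$ contributes exactly two incidences (a loop contributes two incidences with the same vertex, a non-loop contributes one incidence with each of its two endpoints), the degree in $\S{G}$ of every $w \in S$ is $2$. Because $\{S,V\}$ is a bipartition (Remark~\ref{rem-Q(G)} ii)), every edge of $\S{G}$ has exactly one endpoint in $S$, so
\[
|E(\S{G})| \;=\; \sum_{w\in S}\delta_{\S{G}}(w) \;=\; 2|S|,
\]
which simultaneously proves (i) and (iii).

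For part (ii), I would first note that $|V(\S{G})| = |V| + |S| = |V(G)| + |E(G)|$. If $G\cong \S{G}$ then their orders coincide, forcing $|E(G)|=0$, i.e.\ $G\cong E_n$. Conversely, if $G\cong E_n$ then $S=\emptyset$, so $\S{G}$ has vertex set $V$ and no edges, hence $\S{G}\cong E_n\cong G$. For part (iv), a vertex $v\in V$ is adjacent in $\S{G}$ precisely to the vertices $w\in S$ whose corresponding edge is incident with $v$ in $G$, with multiplicity equal to the number of such incidences; summing these multiplicities over all edges of $G$ yields exactly $\delta_G(v)$, so $\delta_{\S{G}}(v)=\delta_G(v)$.

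No step is genuinely difficult; the only subtlety to watch is the treatment of loops and multiple edges, which is precisely why the definition of $\S{G}$ is stated in terms of incidences rather than adjacencies. Once that convention is respected, each of the four items reduces to a one-line verification from the definition.
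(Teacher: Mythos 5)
Your proof is correct; the paper itself offers no proof of Proposition~\ref{s1}, merely asserting that the items "can be deduced easily from the definition," and your direct verification from the incidence-based definition (including the careful handling of loops contributing two incidences) is exactly the intended argument. The observation that (iii) immediately yields (i) via the bipartition is a clean way to organize it.
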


\begin{Thm}\cite{Galai} \label{Galai}
Para cualquier gr\'afica $G$,
\begin{itemize}
\item[i)]  $|V(G)|=\alpha(G)+\tau(G)$,
\item[ii)] $|V(G)|=\beta(G)+\rho(G)$.
\end{itemize}
\end{Thm}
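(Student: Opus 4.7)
My plan is to prove the two parts separately, since they rest on rather different observations, although both reduce to a complementation principle.

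For part (i) I would first establish the duality: a set $I\subseteq V(G)$ is independent if and only if $V(G)\setminus I$ is a vertex cover. Indeed, if $I$ is independent, then no edge has both endpoints in $I$, so every edge has at least one endpoint in $V(G)\setminus I$, making the latter a vertex cover; the converse is immediate by contrapositive. The map $I\mapsto V(G)\setminus I$ is therefore a bijection between independent sets and vertex covers, and it reverses the order by size since $|I|+|V(G)\setminus I|=|V(G)|$. Consequently the maximum on one side corresponds to the minimum on the other, yielding $\alpha(G)+\tau(G)=|V(G)|$.

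For part (ii) I would prove both inequalities $\rho(G)\leq |V(G)|-\beta(G)$ and $\beta(G)\leq |V(G)|-\rho(G)$. For the first, start from a maximum matching $M$ of size $\beta(G)$. The $2\beta(G)$ matched vertices are already covered by $M$; for each of the $|V(G)|-2\beta(G)$ unmatched vertices I pick one incident edge (here I use that the paper restricts attention to connected graphs of order $n\ge 2$, so every vertex has at least one incident edge). The resulting edge set covers every vertex and has cardinality at most $\beta(G)+(|V(G)|-2\beta(G))=|V(G)|-\beta(G)$, so $\rho(G)\leq |V(G)|-\beta(G)$.

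For the reverse inequality I would take a minimum edge cover $A$ and analyse the structure of the spanning subgraph $H=(V(G),A)$. The key structural claim is that every connected component of $H$ is a star $K_{1,k}$ with $k\ge 1$: any component containing a path of length $\ge 3$, or any cycle, would admit the removal of an edge while retaining coverage of all vertices, contradicting minimality of $A$. Granted this, if $H$ has $c$ components then $|A|=|V(G)|-c$. Selecting one edge from each component produces a matching in $G$ of size $c=|V(G)|-\rho(G)$, so $\beta(G)\geq |V(G)|-\rho(G)$. Combining the two inequalities gives $\beta(G)+\rho(G)=|V(G)|$.

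The main obstacle is the structural lemma in part (ii) that each component of a minimum edge cover must be a star; the first part and the easy direction of the second part are essentially bookkeeping, but this minimality argument is where the real content lies.
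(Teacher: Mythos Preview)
Your proof is correct and follows the standard textbook argument for Gallai's identities. Note, however, that the paper does not supply its own proof of this statement: it is quoted as a classical result with a citation to Gallai, so there is nothing to compare against. Your treatment of both parts---the complementation bijection for (i), and the ``augment a maximum matching / decompose a minimum edge cover into stars'' argument for (ii)---is exactly the usual proof, and your remark that (ii) needs every vertex to be incident with at least one edge (so that $\rho(G)$ is defined) is the appropriate caveat.
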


\begin{Obs}\label{arbol}
Si $T$ es un \'arbol, entonces $|E(T)|=|V(T)|-1$.
\end{Obs}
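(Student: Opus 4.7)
The plan is to proceed by induction on the order $n=|V(T)|$ of the tree. The base case $n=1$ is trivial: a tree with a single vertex has no edges, so $|E(T)|=0=1-1$.

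For the inductive step, assume the equality holds for every tree of order strictly less than $n$, and let $T$ be a tree with $n\geq 2$ vertices. The key auxiliary fact I would establish first is that every tree of order at least $2$ contains a leaf, i.e.\ a vertex of degree $1$. To see this, take a path $P=v_0v_1\cdots v_k$ in $T$ of maximum length; the endpoint $v_0$ cannot have any neighbour outside $P$ (otherwise $P$ could be extended, contradicting maximality) and it cannot have a neighbour $v_i$ with $i\geq 2$ inside $P$ either (otherwise a cycle would be formed, contradicting the fact that $T$ is acyclic). Hence $v_0$ is adjacent only to $v_1$, so $\delta_T(v_0)=1$.

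Let $v$ be such a leaf and let $e$ be its unique incident edge. Consider $T':=T-v$, the graph obtained by deleting $v$ and $e$. I would verify that $T'$ is still a tree: it remains acyclic because removing a vertex cannot create cycles, and it remains connected because any two vertices of $T'$ are joined by a path in $T$ that does not use the degree-one vertex $v$. Since $|V(T')|=n-1$, the inductive hypothesis gives $|E(T')|=n-2$, and therefore $|E(T)|=|E(T')|+1=n-1$, as desired.

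The only real subtlety is the existence-of-a-leaf step, which is where most of the graph-theoretic content of the statement lives; the rest is a routine induction. Everything else (the fact that deleting a leaf preserves connectedness and acyclicity) reduces to standard manipulations with paths and cycles, and no previously stated result from the paper is required.
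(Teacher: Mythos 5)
Your proof is correct and complete: the induction on $n$, together with the auxiliary claim that every tree on at least two vertices has a leaf (justified via a maximum-length path), is the standard textbook argument, and the verification that deleting a leaf leaves a tree is handled properly. Note that the paper states this as a classical observation without providing any proof at all, so there is nothing to compare against; your write-up simply supplies the routine argument the authors chose to omit.
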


\begin{Thm}\label{geras}
Si $G$ es una gr\'afica conexa de orden $n$ y tama\~no $m$, entonces $\alpha(\S{G})=m\acute{a}x\lbrace n,m\rbrace$.
\end{Thm}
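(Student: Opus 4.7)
The plan is to establish both inequalities $\alpha(\S{G}) \geq \max\{n,m\}$ and $\alpha(\S{G}) \leq \max\{n,m\}$. The first is essentially immediate: by Remark \ref{rem-Q(G)}(ii), $V$ and $S$ are both independent sets of $\S{G}$, and their cardinalities are $n$ and $m$ respectively, so $\alpha(\S{G}) \geq \max\{n,m\}$ at once.

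For the upper bound, I would take an arbitrary independent set $I$ of $\S{G}$ and split it as $I = I_V \cup I_S$ with $I_V \subseteq V$ and $I_S \subseteq S$. Because every $s \in S$ is adjacent in $\S{G}$ exactly to the two endpoints of the corresponding edge of $G$, independence of $I$ forces each $s \in I_S$ to correspond to an edge of $G$ whose two endpoints lie in $V \setminus I_V$. Hence
\[
|I| = |I_V| + |I_S| \leq |I_V| + |E(G[V \setminus I_V])|,
\]
so, writing $U = V \setminus I_V$, it is enough to show that
\[
(n - |U|) + |E(G[U])| \leq \max\{n,m\} \qquad \text{for every } U \subseteq V.
\]

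I would split into two cases according to the sign of $m-n$. If $m \leq n$, then since $G$ is connected we have $m \in \{n-1, n\}$, so $G$ is a tree or unicyclic, hence a pseudoforest; any induced subgraph $G[U]$ is again a pseudoforest, and therefore $|E(G[U])| \leq |U|$. The desired inequality then reduces to $n \leq n = \max\{n,m\}$. If $m \geq n+1$, the inequality is equivalent to
\[
m - |E(G[U])| \geq n - |U|,
\]
and since $m - |E(G[U])|$ counts the edges of $G$ with at least one endpoint in $W := V \setminus U$, the task becomes: \emph{for every $W \subseteq V$, the number of edges of $G$ incident to $W$ is at least $|W|$.}

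The main obstacle is this last claim, and I would handle it by a spanning-tree argument. Since $G$ is connected with $m \geq n$, fix a spanning tree $T$ of $G$ (so $|E(T)| = n-1$ by Observation \ref{arbol}) and pick any non-tree edge $e^{*} = xy$, which exists because $m \geq n > n-1$. Rooting $T$ at $x$ and writing, for each $v \in V \setminus \{x\}$, $f_v$ for the unique tree-edge joining $v$ to its parent, while setting $f_x := e^{*}$, yields $n$ pairwise distinct edges of $G$ with $f_v$ incident to $v$. For any $W \subseteq V$, the set $\{f_v : v \in W\}$ then consists of $|W|$ distinct edges incident to $W$, which proves the claim and completes the case $m \geq n+1$. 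Combining both cases gives $\alpha(\S{G}) \leq \max\{n,m\}$, and equality holds.
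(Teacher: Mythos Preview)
Your argument is correct, and it is genuinely different from the paper's. The paper proceeds by a two-layer induction: first it proves the result for trees by induction on $n$ (deleting a subdivision vertex $e\in S\setminus I$ to split into two smaller trees), and then it proves the general case by induction on $m$, again deleting some $e\in S\setminus I$ and invoking the inductive hypothesis on $\S{G\setminus\{e\}}$, treating separately the cases where $G\setminus\{e\}$ stays connected or breaks into two components. Your proof, by contrast, is entirely non-inductive: after the trivial lower bound you reduce the upper bound to the inequality $(n-|U|)+|E(G[U])|\le\max\{n,m\}$, and you dispatch this with two clean structural facts --- the pseudoforest bound $|E(G[U])|\le |U|$ when $m\le n$, and the rooted-spanning-tree-plus-one-extra-edge injection $v\mapsto f_v$ showing that every $W\subseteq V$ meets at least $|W|$ edges when $m\ge n+1$. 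What your approach buys is directness and a transparent reason for the bound (an explicit system of distinct edge-representatives for the vertices); what the paper's inductive approach buys is that it never needs to exhibit such a system, only to peel off one subdivision vertex at a time. Both are valid, but yours is tidier and avoids the bookkeeping that the paper's disconnected case requires.
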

\begin{proof}
La demostraci\'on se har\'a en dos pasos.
\begin{enumerate}
\item[Paso 1:]\emph{El resultado es cierto cuando $G$ es un \'arbol.}\\
Usaremos inducci\'on sobre $n$ siendo el paso base para $n=2$, no muy dif\'icil de verificar.\\
%Sea pues $n>2$ y $G$ un \'arbol con $n$ v\'ertices. 
Dado un \'arbol $G$ de orden $n>2$, sea $I$ un conjunto independiente de $\S{G}$ de tama\~no m\'aximo. Ya que $S$ es un conjunto independiente maximal en $\S{G}$, por el Lema \ref{arbol} $m<n$, debe existir $e \in S \setminus I$. De esta forma se tiene que 
$$\S{G}\setminus \lbrace e \rbrace = \S{G \setminus \lbrace e \rbrace} = \S{G_1} \cup \S{G_2} $$ donde $G_1$ y $G_2$ son las componentes de $G\setminus \lbrace e \rbrace$. Notemos que $I$ sigue siendo un conjunto independiente en $\S{G\setminus \lbrace e \rbrace}$, y por hip\'otesis de inducci\'on se tiene que
$$\alpha(\S{G\setminus \lbrace e \rbrace})=\alpha(\S{G_1})+\alpha(\S{G_2})=n_1+n_2=n=m\acute{a}x \lbrace n,m\rbrace.$$ Y de aqu\'i obtenemos que $|I|=m\acute{a}x \lbrace n,m\rbrace$.
\item[Paso 2:] \emph{El resultado es cierto en general.}\\
Este paso lo demostraremos por inducci\'on sobre $m$ siendo el paso base precisamente el Paso $1$.\\
Sea $I \subseteq V(\S{G})$ un conjunto independiente m\'aximo. Si $S \subseteq I$ se tiene que $\alpha(\S{G})=|S|$ we are done. En otro caso, consid\'erese la gr\'afica $\S{G}\setminus \lbrace e \rbrace$, donde $e \in S\setminus I$; n\'otese que $I$ es un conjunto independiente en esta gr\'afica. Si $\S{G} \setminus \lbrace e \rbrace$ es conexa, tomando en cuenta que $\S{G}\setminus \lbrace e \rbrace = \S{G \setminus \lbrace e \rbrace}$, la hip\'otesis de inducci\'on implica que $\alpha(\S{G}\setminus \lbrace e \rbrace)=m\acute{a}x \lbrace n, m-1 \rbrace$ y por lo tanto $|I|=m\acute{a}x\lbrace n,m \rbrace$. Si $\S{G}\setminus \lbrace e \rbrace$ es disconexa, entonces las dos componentes $G_1$ y $G_2$ de $G\setminus \lbrace e \rbrace$ son tales que $$\S{G}\setminus \lbrace e \rbrace=\S{G_1} \cup \S{G_2} $$ y una vez m\'as la hip\'otesis de inducci\'on y la independencia de $I$  en $\S{G}\setminus \lbrace e \rbrace$ nos da que $$\alpha(\S{G}\setminus \lbrace e \rbrace)=\alpha(\S{G_1})+\alpha(\S{G_2})=m\acute{a}x\lbrace n_1,m_1 \rbrace+m\acute{a}x\lbrace n_2,m_2 \rbrace \leq m\acute{a}x\lbrace n,m \rbrace $$ y de aqu\'i se obtiene que $|I|=m\acute{a}x\lbrace n_,m \rbrace$.
\end{enumerate}
\end{proof}

\begin{Cor}
Si $G$ es una gr\'afica conexa de orden $n$ y tama\~no $m$, entonces $\tau(\S{G})=min\lbrace n,m\rbrace$.
\end{Cor}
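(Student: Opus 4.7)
The plan is to deduce this corollary directly from the two results immediately preceding it: Theorem \ref{Galai}(i), which asserts the Gallai identity $|V(H)|=\alpha(H)+\tau(H)$ for any graph $H$, and Theorem \ref{geras}, which computes $\alpha(\S{G})=\max\{n,m\}$.

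First I would identify the number of vertices of $\S{G}$. By Remark \ref{rem-Q(G)}(i), the vertex set of $\S{G}$ partitions as $V\cup S$ with $|V|=|V(G)|=n$ and $|S|=|E(G)|=m$, so $|V(\S{G})|=n+m$.

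Next I would apply Gallai's identity to the graph $\S{G}$:
\begin{equation*}
\tau(\S{G}) = |V(\S{G})| - \alpha(\S{G}) = (n+m) - \max\{n,m\}.
\end{equation*}
Finally, using the elementary identity $a+b-\max\{a,b\}=\min\{a,b\}$ for nonnegative integers $a,b$, this simplifies to $\tau(\S{G})=\min\{n,m\}$, which is the desired conclusion.

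There is no real obstacle: once Theorem \ref{geras} is established the corollary is a one-line computation, and the only thing to check carefully is that Gallai's theorem applies to $\S{G}$ (it does, since $\S{G}$ is a simple graph whenever $G$ is, and in the general case it is still a graph to which Gallai applies without modification).
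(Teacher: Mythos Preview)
Your argument is correct and is exactly the intended one: the paper states this corollary immediately after Theorem \ref{geras} without proof, and the only way it follows is via Gallai's identity (Theorem \ref{Galai}(i)) applied to $\S{G}$ together with $|V(\S{G})|=n+m$, precisely as you wrote.
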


\begin{Lem}\label{mio}
Una gr\'afica $G$ contiene como subgr\'afica a lo m\'as a un $k$-ciclo si y s\'olo si $|E(G)|\leq |V(G)|$.
\end{Lem}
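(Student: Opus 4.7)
The plan is to first invoke the standing assumption that $G$ is connected, which immediately yields $|E(G)|\geq |V(G)|-1$ and reduces the problem to the two extremal cases $|E(G)|=|V(G)|-1$ and $|E(G)|=|V(G)|$.

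For the forward implication, I would split according to whether $G$ is acyclic or contains exactly one cycle $C$. In the acyclic case, $G$ is a tree and Observation~\ref{arbol} gives $|E(G)|=|V(G)|-1 \leq |V(G)|$. In the unicyclic case, I would remove any edge $e\in E(C)$: since $C$ was the only cycle of $G$, the graph $G\setminus\{e\}$ is acyclic, and since $e$ lies on a cycle its removal preserves connectivity, so $G\setminus\{e\}$ is a spanning tree of $G$. Observation~\ref{arbol} then gives $|E(G)|-1=|V(G)|-1$, and hence $|E(G)|=|V(G)|$.

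For the backward implication, the connectivity lower bound combined with the hypothesis leaves only $|E(G)|\in\{|V(G)|-1,|V(G)|\}$. When $|E(G)|=|V(G)|-1$, the graph is a tree and contains no cycle, so the conclusion is trivial. When $|E(G)|=|V(G)|$, I would fix any spanning tree $T$ of $G$; by Observation~\ref{arbol} there is a single edge $e=uv$ in $E(G)\setminus E(T)$. The crux will be to argue that every cycle of $G$ uses $e$: a cycle contained entirely in $T$ would contradict $T$ being acyclic. Any cycle through $e$ then decomposes as $e$ together with a $uv$-path in $T$, and since $T$ admits a unique $uv$-path, there is exactly one cycle in $G$.

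The main obstacle is the uniqueness argument in the backward direction: one must verify both that no cycle of $G$ can avoid the unique non-tree edge $e$, and that the fundamental cycle of $e$ relative to $T$ is the only cycle through $e$. Both facts rest on the classical uniqueness of paths in trees, but they must be combined explicitly to rule out any additional subgraph cycle; a separate minor care point is checking that removing an edge of $C$ in the forward direction does not disconnect $G$, which follows from the standard fact that an edge is a bridge if and only if it lies on no cycle.
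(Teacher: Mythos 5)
Your argument is correct and complete: the paper states Lemma~\ref{mio} without any proof, so there is nothing to compare against, and your write-up fills that gap with the standard argument. Both directions are sound — invoking connectedness (the paper's standing assumption, and genuinely necessary here, since two disjoint triangles satisfy $|E|=|V|$ with two cycles) to get $|E(G)|\geq |V(G)|-1$, using Observation~\ref{arbol} together with the bridge characterization for the forward direction, and the uniqueness of the fundamental cycle of the single non-tree edge for the converse. The only presentational suggestion is to state explicitly that a connected graph on $n$ vertices with $n-1$ edges is a tree (you use this in the backward direction for the case $|E(G)|=|V(G)|-1$); it is standard, but it is a converse of Observation~\ref{arbol} rather than the observation itself.
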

%\begin{proof}
%Si $G$ no contiene ciclos, entonces $G$ es un \'arbol y por el Lema \ref{arbol}, $|E(G)|=|V(G)|-1$.\\
%En otro caso, sea $v_1v_2\dots v_k$ el \'unico ciclo contenido en $G$. La gr\'afica $G\setminus \lbrace v_1v_2 \rbrace$ es un \'arbol y por lo tanto tiene $|V(G)|-1$ aristas y de aqu\'i se desprende que $G$ tiene $|V(G)|$ aristas.\\ 
%\end{proof}

%\begin{Lem}\label{mio2}
%Una gr\'afica $G$ contiene m\'as de un $k$-ciclo si y s\'olo si $|V(G)|<|E(G)|$.
%\end{Lem}

\begin{Thm}\label{eq1}
Sea $G$ una gr\'afica. Los siguientes enunciados son equivalentes.
\begin{enumerate}
\item[i)] $G$ contiene a lo m\'as un ciclo,
\item[ii)] $\alpha(\S{G})=|V|$,
\item[iii)] $\tau(\S{G})=|S|$.
\end{enumerate}
\end{Thm}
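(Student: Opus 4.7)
The plan is to reduce all three statements to the single inequality $m \leq n$, where $n = |V|$ and $m = |S|$ are the order and size of $G$, and then chain the equivalences through the results already established in the paper.

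First I would invoke Lemma \ref{mio} to rewrite (i) in purely numerical form: $G$ contains at most one cycle if and only if $m \leq n$. This gives a clean arithmetic reformulation that I can match against the formulas for $\alpha(\S{G})$ and $\tau(\S{G})$.

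Next, I would apply Theorem \ref{geras}, which tells us that $\alpha(\S{G}) = \max\{n,m\}$. Since $|V| = n$ by Remark \ref{rem-Q(G)}, we have $\alpha(\S{G}) = |V|$ precisely when $\max\{n,m\} = n$, i.e., when $m \leq n$. This establishes (i) $\Leftrightarrow$ (ii). For (i) $\Leftrightarrow$ (iii), I would use the Corollary, which gives $\tau(\S{G}) = \min\{n,m\}$; since $|S| = m$, the equality $\tau(\S{G}) = |S|$ is equivalent to $\min\{n,m\} = m$, which again is $m \leq n$.

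Putting the three equivalences together through the common condition $m \leq n$ yields (i) $\Leftrightarrow$ (ii) $\Leftrightarrow$ (iii). There is no genuine obstacle here: all the work is contained in Lemma \ref{mio}, Theorem \ref{geras}, and its Corollary, so the proof is essentially a one-line bookkeeping argument once those results are cited. The only subtlety to watch is the standing connectedness hypothesis on $G$, which is needed to apply Theorem \ref{geras} and its Corollary; since the paper explicitly restricts attention to connected graphs, this is automatic.
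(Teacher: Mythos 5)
Your proposal is correct and follows essentially the same route as the paper: both reduce everything to the inequality $m\le n$ via Theorem \ref{geras} and Lemma \ref{mio}, the only cosmetic difference being that the paper handles (i) $\Rightarrow$ (ii) by a short tree/unicyclic case analysis and obtains (ii) $\Leftrightarrow$ (iii) directly from Gallai's identity rather than from the corollary $\tau(\S{G})=\min\{n,m\}$. Your observation that the standing connectedness hypothesis is what makes Lemma \ref{mio} and Theorem \ref{geras} applicable is a fair point and consistent with the paper's conventions.
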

\begin{proof}
%Vamos a demostrar que i) $\Rightarrow$ ii) $\Rightarrow$ i) y que ii) $\Rightarrow$ iii) $\Rightarrow$ ii).

\emph{i)} $\Rightarrow$ \emph{ii)}. Supongamos que $G$ contiene a lo m\'as un ciclo, es decir $G$ no tiene ciclos \'o $G$ tiene un \'unico ciclo. Si $G$ no tiene ciclos, entonces $G$ es un \'arbol, y por lo tanto $|S|=|E(G)|=|V(G)|-1<|V(G)|=|V|$. Si $G$ contiene un \'unico $k$-ciclo, entonces $|V(G)|=|E(G)|$, en cualquier caso, el resultado se desprende del Teorema \ref{geras}.

\emph{ii)} $\Rightarrow$ \emph{i)}. Supongamos que $\alpha(\S{G})=|V|$. Ya que $S$ y $V$ son conjuntos independientes, $|E(G)|=|S|\leq |V|=|V(G)|$. Por el Lema \ref{mio} $G$ contiene a lo m\'as un ciclo.

Las implicaciones \emph{ii)} $\Rightarrow$ \emph{iii)} y \emph{iii)} $\Rightarrow$ \emph{ii)} se desprenden del Teorema \ref{Galai}.
\end{proof}

N\'otese que en virtud del Teorema \ref{geras}, el teorema anterior, nos da de manera impl\'icita una caracterizaci\'on de aquellas gr\'aficas $G$, para las cuales $\alpha(\S{G})=|S|$. Tomando en cuenta adem\'as, que $|V(G)|=\alpha(G)+\tau(G)$ (Teorema \ref{Galai}) se tienen el siguiente corolario.

\begin{Cor}\label{alf=S}
Sea $G$ una gr\'afica. Los siguientes enunciados son equivalentes.
\begin{enumerate}
\item[i)] $G$ contiene al menos un ciclo,
\item[ii)] $\alpha(\S{G})=|S|$,
\item[iii)] $\tau(\S{G})=|V|$.
\end{enumerate}
\end{Cor}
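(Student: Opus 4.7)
The plan is to derive all three equivalences as essentially immediate consequences of Theorem \ref{geras} together with Gallai's identity, closely paralleling the proof of Theorem \ref{eq1}. The argument splits into two pieces: i) $\Leftrightarrow$ ii), which uses Theorem \ref{geras} and the standard characterization of trees among connected graphs, and ii) $\Leftrightarrow$ iii), which follows immediately by applying Theorem \ref{Galai} to $\S{G}$.

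For i) $\Rightarrow$ ii), assume $G$ contains at least one cycle. Since $G$ is connected and is not a tree, the elementary fact that a connected graph satisfies $|E(G)| \geq |V(G)|$ exactly when it is not a tree (a direct consequence of Observation \ref{arbol}) yields $m \geq n$. Hence $\max\{n,m\} = m = |S|$, and Theorem \ref{geras} gives $\alpha(\S{G}) = |S|$. Conversely, if $\alpha(\S{G}) = |S|$, then Theorem \ref{geras} forces $m \geq n$; since $G$ is connected this rules out $G$ being a tree, so $G$ must contain a cycle. For ii) $\Leftrightarrow$ iii), Remark \ref{rem-Q(G)} tells us that the order of $\S{G}$ is $|V| + |S|$, and Theorem \ref{Galai}(i) applied to $\S{G}$ reads $\alpha(\S{G}) + \tau(\S{G}) = |V| + |S|$; thus $\alpha(\S{G}) = |S|$ if and only if $\tau(\S{G}) = |V|$.

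There is essentially no serious obstacle; the statement is the complementary half of Theorem \ref{eq1}, a point the remark immediately preceding the corollary already foreshadows. The only mild subtlety worth flagging is that the hypotheses of Theorem \ref{eq1} (\emph{at most} one cycle) and of the present corollary (\emph{at least} one cycle) both hold on unicyclic graphs, where $m = n$; in that case $\max\{n,m\} = n = m$, so $\alpha(\S{G}) = |V| = |S|$ and $\tau(\S{G}) = |S| = |V|$ hold simultaneously, which is perfectly consistent.
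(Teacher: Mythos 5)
Your proof is correct and follows the same route the paper intends: the remark preceding the corollary derives it from Theorem \ref{geras} (giving $\alpha(\S{G})=\max\{n,m\}$, so condition ii) is equivalent to $m\geq n$, i.e.\ to $G$ not being a tree) together with Gallai's identity for the equivalence ii) $\Leftrightarrow$ iii). Your closing observation about unicyclic graphs, where the hypotheses of Theorem \ref{eq1} and of this corollary overlap consistently, is a nice sanity check but not needed for the argument.
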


\begin{Prop}
Sea $G$ una gr\'afica, para cada conjunto independiente m\'aximo $I$ de $\S{G}$ se cumple que $I=V$ \'o $I=S$ si y s\'olo si $G$ es un \'arbol \'o $\delta(G)\geq 2$. 
\end{Prop}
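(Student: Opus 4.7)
The plan is to prove both directions of the equivalence separately, with the reverse direction splitting naturally into the two cases corresponding to the two disjuncts.

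For the reverse direction, let $I$ be a maximum independent set of $\S{G}$ and write $I_V = I \cap V$, $I_S = I \cap S$; the goal is to show $I_V \in \{\emptyset, V\}$, which together with $|I| = \alpha(\S{G})$ (given by Theorem \ref{geras}) forces $I \in \{V, S\}$. If $G$ is a tree, then $|I| = n$, and the subgraph $H$ of $G$ spanned by the edges in $I_S$ is a forest whose vertex set is disjoint from $I_V$; Observation \ref{arbol} applied componentwise gives $|V(H)| \geq |I_S| + 1$ unless $I_S = \emptyset$, and combining this with $|I_V| + |I_S| = n$ and $|I_V| \leq n - |V(H)|$ forces $I_S = \emptyset$, hence $I = V$. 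If $\delta(G) \geq 2$, then $|I| = m$; writing $a = |E(G[I_V])|$ and $b$ for the number of edges of $G$ with exactly one endpoint in $I_V$, the independence of $I$ gives $|I_S| \leq m - (a+b)$, whence $a + b \leq |I_V|$, while $2a + b = \sum_{v \in I_V}\delta_G(v) \geq 2|I_V|$ by the hypothesis $\delta(G) \geq 2$. Subtracting these inequalities yields $a \geq |I_V|$ and forces $b = 0$. Since $G$ is connected, the absence of edges between $I_V$ and $V \setminus I_V$ means $I_V = \emptyset$ or $I_V = V$; the first gives $I = S$, while $I_V = V$ forces $m = a = n$ and $|I_S| = 0$, so again $I = V$.

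For the forward direction I argue the contrapositive: assume $G$ is not a tree and $\delta(G) < 2$. Since $G$ is connected with $n \geq 2$, the minimum degree is exactly $1$, so pick a leaf $v$ and let $e$ be its unique incident edge in $G$. Because $G$ is not a tree it contains a cycle, so $m \geq n \geq 3$ and Theorem \ref{geras} gives $\alpha(\S{G}) = m$. The set $I := (S \setminus \{e\}) \cup \{v\}$ is then independent in $\S{G}$ (the only neighbor of $v$, namely $e$, has been removed) and has cardinality $m$, so it is a maximum independent set. It contains $v \in V$, so $I \neq S$, and it contains the nonempty subset $S \setminus \{e\}$, so $I \neq V$.

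The main obstacle I expect is the $\delta(G) \geq 2$ case of the reverse direction: one must package the independence of $I$ together with the minimum-degree hypothesis into the sharp conclusion $b = 0$. The double-counting trick --- bounding $a + b$ above by $|I_V|$ via $|I_S| = m - (a+b)$, and below by the degree sum $\tfrac{1}{2}\sum_{v \in I_V}\delta_G(v) \geq |I_V|$ --- is what makes the argument work uniformly, avoiding a case split on whether $m > n$ or $m = n$ and handling cleanly the borderline situation (a single cycle) where both $V$ and $S$ are maximum independent sets.
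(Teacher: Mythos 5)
Your proof is correct. The forward direction (contrapositive via a leaf $v$ and its incident edge $e$, taking $(S\setminus\{e\})\cup\{v\}$) and the tree case (the edges of $I\cap S$ span a forest disjoint from $I\cap V$, so a mixed set loses at least one vertex) are essentially identical to the paper's. Where you genuinely diverge is the case $\delta(G)\ge 2$. The paper proves a local augmentation lemma: for $A\subseteq V$ one has $|A|\le |N_{\S{G}}(A)|$ with equality only when $G$ is a cycle, so that replacing $A$ by $N_{\S{G}}(A)$ strictly enlarges any mixed independent set unless $G$ is a cycle, which is then handled as a separate case. You instead exploit maximality globally: from $|I|=m$ and the exclusion of the $a+b$ edges meeting $I_V$ from $I_S$ you get $a+b\le |I_V|$, while the degree hypothesis gives $2a+b\ge 2|I_V|$; subtracting yields $b=0$, and connectivity finishes. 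Both arguments rest on the same handshake-type inequality $\sum_{v\in I_V}\delta_G(v)\ge 2|I_V|$, but your version buys a uniform treatment of the borderline case $m=n$ (the unique-cycle situation, where both $V$ and $S$ are maximum) and avoids the paper's slightly delicate ``equality implies $G$ is a cycle'' step, at the cost of using Theorem \ref{geras} up front rather than deriving a contradiction by direct augmentation.
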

\begin{proof}
Supongamos que $G$ no es un \'arbol y que $\delta(G)=1$. 
Entonces $G$ contiene al menos un ciclo, por el Corolario \ref{alf=S}, $\alpha(\S{G})=|S|$. Entonces existe un conjunto independiente m\'aximo $I \subset V(\S{G})$ tal que $|I|=|S|$. Ya que el grado m\'inimo de $G$ es $1$, existe un v\'ertice $v \in V$ tal que $\delta(v)=1$. Sea $e \in S$ tal que $v \in N_{\S{G}}(e)$. El conjunto $I=(S\setminus \lbrace e \rbrace)\cup \lbrace v \rbrace$ es un conjunto independiente m\'aximo de $\S{G}$ que tiene elementos de $V$ y de $S$.  

Supongamos que $G$ es un \'arbol. Por el Teorema \ref{eq1}, $\alpha(\S{G})=n$. Entonces existe un conjunto independiente m\'aximo $I \subset V(\S{G})$ tal que $|I|=n$. Vamos a demostrar que $I=V$. Supongamos que $I=A \cup B$ con $A \subseteq V$ y $B \subseteq S$ tal que $|A|=r\not = 0$ y $|B|=n-r\not =0$. Obs\'ervese que $N_{\S{G}}(B)\cap A=\emptyset$. Adem\'as $B \subseteq E(\langle N_{\S{G}}(B) \rangle_G)$. Ya que $G$ es un \'arbol, entonces $\langle N_{\S{G}}(B)\rangle_G$ es un bosque y por lo tanto $n-r=|B|\leq |E(\langle N_{\S{G}}(B)\rangle_G)|<|N_{\S{G}}(B)|$. Por lo tanto $|N_{\S{G}}(B)\cup A|=|N_{\S{G}}(B)|+|A|=|N_{\S{G}}(B)|+r>(n-r)+r=n$, lo cual es una contradicci\'on. Por lo tanto $I=V$. 

Ahora supongamos que $\delta(G)\geq 2$.
\begin{enumerate}
\item[Paso 1:] \emph{Si $A \subseteq V$ entonces $|A|\leq |N_{\S{G}}(A)|$. Adem\'as, si se da la igualdad entonces $G$ es un ciclo.}

Sea $H=\langle N_{\S{G}}(A) \rangle_{G}$ la subgr\'afica de $G$ inducida por las aristas del conjunto $N_{\S{G}}(A)$. Observemos que $A \subseteq V(H)$ y que para cada $x \in A$, $\delta_{H}(x)=\delta_{G}(x)$. Por lo tanto $$2|A|\leq \sum_{x \in A} \delta_G(x) \leq \sum_{x \in A} \delta_H(x) \leq \sum_{x \in V(H)} \delta_H(x)=2|E(H)|=2|N_{\S{G}}(A)|,$$
de aqu\'i se desprende la desigualdad deseada.\\ 
Si se da la igualdad $|A|=|N_{\S{G}}(A)|$, entonces en particular se tiene que $$\displaystyle{\sum_{x \in A}} \delta_H(x)=2|A|,$$ de lo cual se obtiene que $\delta_H(x)=\delta_G(x)=2$ para cada $x \in A$, esto implica que $\langle A \rangle_G$ es un ciclo. Si $G \not = \langle A \rangle_G $ no fuera un ciclo existir\'ia un $x \in A$ tal que $\delta(x)_G\geq 3$, lo cual es una contradicci\'on.

\item[Paso 2:] \emph{Se demuestra el teorema.}

Si $G$ es un ciclo, no es muy dif\'icil de verificar que $I=V$ \'o $I=S$. En otro caso, sea $I$ un conjunto independiente de $\S{G}$ tal que $I=A \cup B$ con $A \subseteq V$ y $B \subseteq S$. Por el Paso $1$, $|A|< |N_{\S{G}}(A)|$, y adem\'as $N_{\S{G}}(A)\cap B=\emptyset$. De esta forma $N_{\S{G}}(A)\cup B$ es un conjunto independiente estrictamente m\'as grande que $I$. 
\end{enumerate} 
\end{proof}

\begin{Thm}\label{matching}
Sea $G$  una gr\'afica de orden $n$ y tama\'no $m$, entonces $\beta(\S{G}) = min \lbrace n,m \rbrace$.
\end{Thm}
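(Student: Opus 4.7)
My plan is to reduce this statement to already-established material. By Remark \ref{rem-Q(G)} (ii), the graph $\S{G}$ is bipartite, with parts $V$ and $S$ of sizes $n$ and $m$, so every edge of $\S{G}$ (and hence every edge of any matching) crosses the bipartition. This at once yields the upper bound $\beta(\S{G}) \le \min\{n,m\}$. In fact, a one-line proof of the full equality is available: since $\S{G}$ is bipartite, K\"onig's theorem gives $\beta(\S{G}) = \tau(\S{G})$, and the Corollary after Theorem \ref{geras} has already established $\tau(\S{G}) = \min\{n,m\}$. If we prefer a self-contained argument, I would construct a matching of the required size explicitly, splitting into two cases according to Lemma \ref{mio}.

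\emph{Case 1: $G$ is a tree.} Then $m = n-1$ by Observation \ref{arbol}, so $\min\{n,m\} = m$. Fix any root $r \in V(G)$. To each edge $e = uv \in E(G)$ assign its child endpoint $c(e)$, i.e.\ the endpoint of $e$ farther from $r$. The collection $M = \{\{e, c(e)\} : e \in S\}$ is a set of $m$ edges of $\S{G}$, and it is a matching because every non-root vertex of $G$ is the child endpoint of exactly one edge, while every $e \in S$ is used only once.

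\emph{Case 2: $G$ has at least one cycle.} Then $m \ge n$, so $\min\{n,m\} = n$. Pick a spanning tree $T$ of $G$ and any $e_0 \in E(G)\setminus E(T)$, and set $H = T \cup \{e_0\}$, a unicyclic spanning subgraph of $G$ with $|E(H)| = n$ and unique cycle $C$. Orient $C$ cyclically, and root each connected component of $H\setminus E(C)$ at its unique vertex on $C$, orienting every remaining edge away from the root. Under this orientation every vertex of $G$ is the head of exactly one edge of $H$: each cycle vertex receives one incoming cycle edge and no incoming tree edge (being a root), while each non-cycle vertex receives the single tree edge coming from its parent. Pairing each $e \in E(H) \subseteq S$ with its head in $V$ therefore gives a matching of $\S{G}$ of size $n$.

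The main obstacle is the cycle case, where one must pick a spanning subgraph whose edges can be put in bijection with $V(G)$ through an endpoint-assignment; the unicyclic subgraph $T \cup \{e_0\}$ is precisely the right structure, after which the bijection check is routine. The tree case and the upper bound are immediate from the bipartite structure and prior observations.
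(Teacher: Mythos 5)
Your proof is correct, and it takes a genuinely different route from the paper's. The paper establishes the lower bound by a three-step induction: a leaf-deletion induction showing $\beta(\S{T})=m$ for trees, a strengthening asserting that for any prescribed vertex $v$ of a tree there is a maximum matching of $\S{T}$ avoiding $v$, and then the general case by returning one non-tree edge $vv'$ to a spanning tree and matching its subdivision vertex to the uncovered endpoint $v$. Your explicit construction reaches the same matchings without induction: rooting the tree and pairing each subdivision vertex with the child endpoint of its edge settles the acyclic case, and orienting a unicyclic spanning subgraph $T\cup\{e_0\}$ so that every vertex of $G$ is the head of exactly one of its $n$ edges settles the cyclic case in one stroke. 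This is arguably more robust than the paper's Step 3, which as literally written takes $T_G=G\setminus\{vv'\}$ to be a spanning tree and hence only covers $m=n$, whereas your unicyclic subgraph works for any $m\geq n$. Your K\"onig shortcut is also valid and is the shortest path of all: $\S{G}$ is bipartite by Remark \ref{rem-Q(G)}, so $\beta(\S{G})=\tau(\S{G})$, and the corollary to Theorem \ref{geras} (itself a consequence of Theorem \ref{Galai}) gives $\tau(\S{G})=\min\{n,m\}$; the only cost is importing K\"onig's theorem, which the paper does not otherwise invoke, and in exchange it makes the whole theorem a two-line consequence of results already proved. The upper bound $\beta(\S{G})\leq\min\{n,m\}$ via the bipartition $V\cup S$ is identical in both treatments.
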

\begin{proof}
Supongamos que $\beta(\S{G})> min \lbrace m,n \rbrace$. Sea $M$ un matching m\'aximo de $\S{G}$, entonces $|M|>min \lbrace m,n \rbrace$, es decir, $|M|>n$ \'o $|M| >m$. Ya que cada arista $ev$ de $\S{G}$ cumple que $e \in S$ y $v \in V$, entonces en $|E(G)| >m$ \'o $|V(G)| >n$. Por lo tanto $\beta(\S{G})\leq min \lbrace n,m \rbrace$.
\begin{enumerate}
\item[Paso 1:]\emph{El resultado es cierto para arboles.}\\
Si $G$ es un \'arbol, se tiene que $min \lbrace m,n \rbrace=m$, entonces basta demostrar que existe un matching con $m$ elementos.\\
Procederemos a demostrar este paso usando inducci\'on sobre $n$.\\
Si $n=2$,  no es dif\'icil  verificar el resultado.\\
Sean $G$ un \'arbol con $n>2$, y $v \in V$ tal que $\delta(v)=1$. Notemos que existe $e \in S$ tal que $v \in N_{\S{G}}(e)$. Sea $G'=G \setminus \lbrace v \rbrace$, por hip\'otesis de inducci\'on $\beta(\S{G'})=min \lbrace m-1,n-1 \rbrace$, por lo tanto existe un matching m\'aximo $M$ en $\S{G'}$ tal que $|M|=m-1$. Obs\'ervese que $ev$ no es incidente con alguna arista de $M$ en $\S{G}$. Por lo tanto $M'=M\cup \lbrace ev \rbrace$ es un matching m\'aximo de $\S{G}$. 
\item[Paso 2:]\emph{Para cada $v \in V(G)$, existe un matching m\'aximo $M$ tal que $v$ no es extremo de ninguna arista de $M$.}\\
Una vez m\'as, usaremos inducci\'on sobre $n$ siendo el paso base para $n=2$, no muy dif\'icil de verificar.\\
Sean $G$ un \'arbol y $v \in V$, con $N_G(v)=\lbrace v_1, v_2, \ldots, v_k \rbrace$ y $N_{\S{G}}(v)=\lbrace e_1, e_2, \ldots, e_k \rbrace$. Notemos que la gr\'afica $\S{G\setminus \lbrace v \rbrace}$ tiene $k$ componentes conexas $G_i$ ($i=1,\ldots,k$), adem\'as $|V(G_i)|=n_i<n$ para todo $i=1,\ldots,k$. Por el Paso $1$ y por la hip\'otesis de inducci\'on existe un matching m\'aximo $M_i$ de $G_i$ con $n_i-1$ aristas de tal manera que $v_i$ no es extremo de ninguna arista de $M_i$. Obs\'ervese que cada arista de $A=\lbrace v_1e_1, v_2e_2,\ldots, v_ke_k \rbrace$ no es incidente con alguna arista de $M=\cup_{i=1}^{k}  M_i$. As\'i $M'=A \cup M$ es un matching m\'aximo de $\S{G}$. \\
\item[Paso 3:]\emph{El resultado es cierto en general.}\\
Sean $G \not \cong T$, $T_G$ un \'arbol generador de $G$ y $v$, $v'$ dos v\'ertices de $G$ de tal manera que $T_G=G \setminus \lbrace vv'\rbrace$ con $vv' \in E(G)$. Por definici\'on de $\S{G}$, existe $e \in S$ tal que $v,v' \in N_{\S{G}}(e)$. Por el Paso $1$, $\S{T_G}$ contiene un matching m\'aximo $M$ con $n-1$ aristas. Adem\'as, por el Paso $2$, sin p\'erdida de generalidad, $v$ no es extremo de ninguna arista de $M$. Obs\'ervese que $ev$ no es incidente con alguna arista de $M$ en $\S{G}$. Por lo tanto $M'=M\cup \lbrace ev \rbrace$ es un matching m\'aximo de $\S{G}$. 
\end{enumerate}
\end{proof}

\begin{Cor}
Sea $G$ es una gr\'afica de orden $n$ y tama\~no $m$, entonces $\rho(\S{G})=m\acute{a}x\lbrace n,m \rbrace$.
\end{Cor}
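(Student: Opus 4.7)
The plan is to combine Theorem \ref{matching} with Gallai's identity for edge covers (Theorem \ref{Galai}(ii)), applied directly to the graph $\S{G}$. This reduces the statement to a one-line arithmetic deduction, so the proof should be very short.

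First, I would verify that Gallai's identity is applicable to $\S{G}$, i.e., that $\S{G}$ has no isolated vertices. Since $G$ is connected of order $n \geq 2$, we have $m \geq 1$; by Proposition \ref{s1}, every vertex $w \in S$ satisfies $\delta_{\S{G}}(w)=2$, and every vertex $v \in V$ satisfies $\delta_{\S{G}}(v) = \delta_G(v) \geq 1$. Hence $\S{G}$ has no isolated vertices, so $\rho(\S{G})$ is well-defined and Theorem \ref{Galai}(ii) gives
$$|V(\S{G})| = \beta(\S{G}) + \rho(\S{G}).$$

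Second, I would compute $|V(\S{G})|$. By Remark \ref{rem-Q(G)}(i), $V(\S{G}) = V \cup S$ with $|V|=n$ and $|S|=m$, and since $V \cap S = \emptyset$ this gives $|V(\S{G})| = n+m$.

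Finally, I would invoke Theorem \ref{matching}, which states $\beta(\S{G}) = \min\{n,m\}$, and conclude
$$\rho(\S{G}) = (n+m) - \min\{n,m\} = \max\{n,m\},$$
as desired. There is no real obstacle here; the only point requiring a moment of care is the verification that $\S{G}$ has no isolated vertices, which is needed to legitimately apply Gallai's identity, but this is immediate from the connectedness hypothesis on $G$.
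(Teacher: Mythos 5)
Your proof is correct and follows exactly the route the paper intends: the corollary is stated right after Theorem \ref{matching} precisely so that Gallai's identity $|V(\S{G})|=\beta(\S{G})+\rho(\S{G})$ yields $\rho(\S{G})=(n+m)-\min\{n,m\}=\max\{n,m\}$. Your extra check that $\S{G}$ has no isolated vertices is a sensible precaution that the paper omits.
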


\begin{Cor}
Sea $G$ una gr\'afica, entonces
\begin{enumerate}
\item[i)] $\alpha(\S{G})=\rho(\S{G})$.
\item[ii)] $\tau(\S{G})=\beta(\S{G})$.
\end{enumerate}
\end{Cor}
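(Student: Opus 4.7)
The plan is to combine the four explicit formulas that have already been established for the invariants of $\S{G}$ and read off both equalities immediately. Concretely, Theorem \ref{geras} gives $\alpha(\S{G}) = \max\{n,m\}$, the corollary just above states $\rho(\S{G}) = \max\{n,m\}$, Theorem \ref{matching} gives $\beta(\S{G}) = \min\{n,m\}$, and the corollary right after Theorem \ref{geras} gives $\tau(\S{G}) = \min\{n,m\}$. So i) and ii) each follow by chaining two of these identities together.

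First I would invoke Theorem \ref{geras} and the corollary $\rho(\S{G}) = \max\{n,m\}$ to obtain
\[
\alpha(\S{G}) = \max\{n,m\} = \rho(\S{G}),
\]
which proves i). Then I would invoke the corollary $\tau(\S{G}) = \min\{n,m\}$ together with Theorem \ref{matching} to obtain
\[
\tau(\S{G}) = \min\{n,m\} = \beta(\S{G}),
\]
which proves ii).

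As a sanity check (and an alternative route for either equality once the other is known), one can also observe that $|V(\S{G})| = n + m = \max\{n,m\} + \min\{n,m\}$, so that Gallai's identities from Theorem \ref{Galai} applied to $\S{G}$ give $\alpha(\S{G}) + \tau(\S{G}) = n+m = \beta(\S{G}) + \rho(\S{G})$; combined with any three of the four explicit formulas this forces the fourth, and in particular yields the two claimed equalities. Since there is no real obstacle—the heavy lifting was done in Theorems \ref{geras} and \ref{matching}—I expect the proof to consist of nothing more than citing these previous results.
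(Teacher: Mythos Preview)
Your proposal is correct and is exactly the intended argument: the paper states this corollary without proof precisely because it follows at once from the four formulas $\alpha(\S{G})=\max\{n,m\}$, $\tau(\S{G})=\min\{n,m\}$, $\beta(\S{G})=\min\{n,m\}$, and $\rho(\S{G})=\max\{n,m\}$ established in Theorem~\ref{geras}, its corollary, Theorem~\ref{matching}, and its corollary.
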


\begin{Thm}
Sea $G$ una gr\'afica. Los siguientes enunciados son equivalentes.
\begin{itemize}
\item[i)] $G$ contiene a lo m\'as un ciclo.
\item[ii)] $\beta(\S{G})=|S|$.
\item[iii)] $\rho(\S{G})=|V|$.
\end{itemize}
\end{Thm}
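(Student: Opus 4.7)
The plan is to reduce the theorem to a straightforward translation between the cardinalities $|V|=n$ and $|S|=m$, using the matching formula of Theorem \ref{matching}, the Gallai identity from Theorem \ref{Galai}, and the characterization of graphs with at most one cycle given by Lemma \ref{mio}.

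First I would record the basic identities. By Remark \ref{rem-Q(G)}, $|V(\S{G})|=|V|+|S|=n+m$. By Theorem \ref{matching}, $\beta(\S{G})=\min\{n,m\}$. Applying Theorem \ref{Galai}(ii) to $\S{G}$ gives $\rho(\S{G})=|V(\S{G})|-\beta(\S{G})=n+m-\min\{n,m\}=\max\{n,m\}$. This immediately yields the equivalence \emph{ii)} $\Leftrightarrow$ \emph{iii)}: indeed, $\beta(\S{G})=|S|=m$ forces $\min\{n,m\}=m$, which is the same as $\max\{n,m\}=n=|V|$, i.e., $\rho(\S{G})=|V|$, and the converse is identical.

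For \emph{i)} $\Rightarrow$ \emph{ii)}, I would invoke Lemma \ref{mio}: if $G$ contains at most one cycle, then $m=|E(G)|\leq|V(G)|=n$, so $\min\{n,m\}=m$, and Theorem \ref{matching} gives $\beta(\S{G})=m=|S|$. For \emph{ii)} $\Rightarrow$ \emph{i)}, I run this argument in reverse: if $\beta(\S{G})=|S|=m$, then $\min\{n,m\}=m$ by Theorem \ref{matching}, hence $m\leq n$, so again Lemma \ref{mio} guarantees that $G$ contains at most one cycle.

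There is essentially no obstacle here; the whole statement is a repackaging of Theorem \ref{matching} combined with the Gallai identity and Lemma \ref{mio}. The only care needed is making sure the $\min$/$\max$ comparison is handled symmetrically so that both directions of each implication are written out cleanly, and confirming that the case $m=n$ (the single-cycle case) is covered consistently by both the hypothesis and the conclusions, which it is since $\min\{n,n\}=\max\{n,n\}=n=m$.
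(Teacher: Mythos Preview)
Your argument is correct and follows essentially the same route as the paper: both rely on Theorem~\ref{matching} for $\beta(\S{G})=\min\{n,m\}$, Lemma~\ref{mio} to translate $m\le n$ into ``at most one cycle'', and Theorem~\ref{Galai} (applied to $\S{G}$) for the equivalence of \emph{ii)} and \emph{iii)}. The only cosmetic difference is that for \emph{i)} $\Rightarrow$ \emph{ii)} the paper splits into the tree and single-cycle cases explicitly, whereas you invoke Lemma~\ref{mio} directly in both directions; this is a harmless streamlining.
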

\begin{proof}
Vamos a demostrar que i) $\Rightarrow$ ii) $\Rightarrow$ i) y que ii) $\Rightarrow$ iii) $\Rightarrow$ ii).

i) $\Rightarrow$ ii). Supongamos que $G$ contiene a lo m\'as un ciclo, es decir $G$ no tiene ciclos \'o $G$ tiene un \'unico ciclo. Si $G$ no tiene ciclos, entonces $G$ es un \'arbol, y por lo tanto $|S|=|E(G)|=|V(G)|-1<|V(G)|=|V|$. Si $G$ contiene un \'unico $k$-ciclo, entonces $|V(G)|=|E(G)|$, en cualquier caso, el resultado se desprende del Teorema \ref{matching}.

ii) $\Rightarrow$ i). Supongamos que $\beta(\S{G})=|S|$. Por el Teorema \ref{matching}, $\beta(\S{G})=min \lbrace |V|,|S|\rbrace$, por lo tanto $|E(G)|=|S|\leq|V|=|V(G)|$. Por el Lema \ref{mio} $G$ contiene a lo m\'as un ciclo.

Las implicaciones ii) $\Rightarrow$ iii) y iii) $\Rightarrow$ ii) se desprenden del Teorema \ref{Galai}.
\end{proof}

\begin{Cor}
Sea $G$ una gr\'afica. Los siguientes enunciados son equivalentes.
\begin{itemize}

\item[i)] $G$ contiene al menos un ciclo.
\item[ii)] $\beta(\S{G})=|V|$.
\item[iii)] $\rho(\S{G})=|S|$.
\end{itemize}
\end{Cor}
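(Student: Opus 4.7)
The plan is to derive the corollary directly from three tools already established: Theorem \ref{matching}, which gives $\beta(\S{G}) = \min\{n,m\}$; Lemma \ref{mio}, which relates the number of cycles of $G$ to the comparison between $n = |V|$ and $m = |S|$; and Galai's identity (Theorem \ref{Galai}) applied to $\S{G}$, which reads $|V| + |S| = \beta(\S{G}) + \rho(\S{G})$. The overall structure mirrors that of the preceding theorem, but covers the complementary regime $m \geq n$ rather than $m \leq n$.

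First, I would prove i) $\Rightarrow$ ii). Since we restrict to connected graphs, ``$G$ contains at least one cycle'' is equivalent to ``$G$ is not a tree'', which forces $m \geq n$. Substituting into Theorem \ref{matching} yields $\beta(\S{G}) = \min\{n,m\} = n = |V|$. For the converse ii) $\Rightarrow$ i), assume $\beta(\S{G}) = |V| = n$; Theorem \ref{matching} then gives $\min\{n,m\} = n$, hence $m \geq n$. If $m > n$, Lemma \ref{mio} (in contrapositive form) yields that $G$ contains more than one cycle; if $m = n$, the standard observation that a connected graph with as many edges as vertices has exactly one cycle gives the conclusion. Either way, $G$ has at least one cycle.

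The equivalence ii) $\Leftrightarrow$ iii) is immediate from Theorem \ref{Galai} applied to $\S{G}$: since $|V(\S{G})| = |V| + |S|$, the relation $\beta(\S{G}) + \rho(\S{G}) = |V| + |S|$ shows that $\beta(\S{G}) = |V|$ if and only if $\rho(\S{G}) = |S|$.

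I do not anticipate any real obstacle: all three ingredients are already proved, and this corollary is essentially the ``complement'' of the preceding theorem. The only mildly delicate point is the translation between ``$G$ contains at least one cycle'' and the numerical condition $m \geq n$ for connected graphs, which rests on the standard characterization of trees as the connected graphs with $m = n-1$; everything else is bookkeeping via the two displayed identities.
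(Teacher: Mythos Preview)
Your proposal is correct and matches the paper's implicit approach: the corollary is stated without proof, being the direct ``complementary'' consequence of Theorem~\ref{matching}, Lemma~\ref{mio}, and Theorem~\ref{Galai}, exactly as you outline. One minor simplification: for ii) $\Rightarrow$ i) you do not need the case split via Lemma~\ref{mio}; since $G$ is connected, $m \geq n$ already rules out $G$ being a tree (which would force $m = n-1$), so $G$ must contain a cycle.
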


\begin{Cor}
Sea $G$ una gr\'afica conexa simple de orden $n$ y tama\~no $m$, entonces $|V(\S{G})|=\alpha(\S{G})+\beta(\S{G})$.
\end{Cor}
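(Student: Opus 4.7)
The plan is to observe that this corollary is an immediate combination of the two main theorems already established, namely Theorem \ref{geras} and Theorem \ref{matching}, together with the elementary identity $\max\{a,b\}+\min\{a,b\}=a+b$ for real numbers $a,b$.

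First I would compute $|V(\S{G})|$ directly from the construction. By Remark \ref{rem-Q(G)}, the vertex set of $\S{G}$ is partitioned as $V\cup S$ with $|V|=|V(G)|=n$ and $|S|=|E(G)|=m$, so $|V(\S{G})|=n+m$. Next I would invoke Theorem \ref{geras} to obtain $\alpha(\S{G})=\max\{n,m\}$, and Theorem \ref{matching} to obtain $\beta(\S{G})=\min\{n,m\}$.

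Finally, adding these two quantities gives
\[
\alpha(\S{G})+\beta(\S{G})=\max\{n,m\}+\min\{n,m\}=n+m=|V(\S{G})|,
\]
which is the desired identity. Since both theorems were proved for arbitrary connected graphs of order $n$ and size $m$, the hypothesis on $G$ is exactly what is needed for the two quoted results to apply. There is no real obstacle here: the entire content of the corollary is absorbed in the two previous theorems, and the present statement is just the numerical combination of their conclusions.
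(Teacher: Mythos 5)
Your proof is correct and is exactly the intended derivation: the paper states this as an unproved corollary following Theorems \ref{geras} and \ref{matching}, and the identity $\max\{n,m\}+\min\{n,m\}=n+m=|V(\S{G})|$ is the whole content.
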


\begin{Prop}
Una gr\'afica $G$ es simple si y s\'olo si $\S{G}$ es simple y no contiene subgr\'aficas isomorfas  a $C_4$.
\end{Prop}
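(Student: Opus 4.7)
The plan is to prove both implications by tracking how the two ways a graph can fail to be simple, namely loops and parallel edges, manifest themselves in $\S{G}$. A loop at $v$ produces a pair of parallel edges at $v$ in $\S{G}$ (by the multiplicity rule in the definition), whereas two parallel edges between $u$ and $v$ in $G$ produce a $4$-cycle in $\S{G}$. Accordingly I would prove the forward direction directly and the reverse direction by contrapositive, handling the two failure modes as separate cases.

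For the forward direction, assume $G$ is simple. First, $\S{G}$ is bipartite with parts $V$ and $S$ by Remark 2.1, so $\S{G}$ has no loops. To rule out multiple edges in $\S{G}$, I would use the definition: the number of edges between $v\in V$ and $e\in S$ equals the number of times $e$ is incident with $v$ in $G$, which for a simple graph is either $0$ or $1$. To rule out $C_4$, observe that since $V$ and $S$ are independent in $\S{G}$, any cycle of $\S{G}$ must alternate between $V$ and $S$, so a hypothetical $C_4$ has the form $v_1 e_1 v_2 e_2 v_1$ with $v_1\neq v_2$ in $V$ and $e_1\neq e_2$ in $S$; but then $e_1$ and $e_2$ both have endpoint set $\{v_1,v_2\}$ in $G$, contradicting simplicity of $G$.

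For the reverse direction, I would show the contrapositive: if $G$ is not simple then $\S{G}$ is not simple or $\S{G}$ contains $C_4$. If $G$ has a loop $e$ at a vertex $v$, then $e$ is incident with $v$ twice, so by definition $\S{G}$ has two parallel edges between $v$ and $e$, violating simplicity. Otherwise $G$ must have two distinct parallel edges $e_1, e_2$ joining some pair of vertices $u,v$; then $u,e_1,v,e_2,u$ is a $4$-cycle in $\S{G}$, giving the required $C_4$ subgraph.

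The main care needed is to use the \emph{general} definition of $\S{G}$ (the bipartition with incidence multiplicities) rather than the simplified ``insert a vertex on each edge'' version, since only the former makes sense when $G$ has loops or parallel edges. Once that is kept in mind, both directions are a direct unpacking of the definition together with the bipartiteness of $\S{G}$ recorded in Remark 2.1, so no serious obstacle remains.
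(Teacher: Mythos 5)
Your proposal is correct and follows essentially the same route as the paper: both directions are handled by splitting the failure of simplicity into loops (which give parallel edges between $v$ and $e$ in $\S{G}$) and parallel edges (which give a $C_4$), using the bipartiteness of $\S{G}$ to exclude loops and to force any $C_4$ to alternate between $V$ and $S$. The only cosmetic difference is that you state the first implication directly (with a contradiction for the $C_4$ case) while the paper phrases it as a contrapositive; the content is identical.
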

\begin{proof}
Sup\'ongase que $\S{G}$ contiene un $4-ciclo$ $v_1v_2v_3v_4$. Como cada arista en $\S{G}$ tiene un extremo en $S$ y otro en $V$ podemos suponer sin p\'erdida de generalidad que $v_2,v_4 \in S$ mientras que $v_1,v_3 \in V$. Dado que si dos v\'ertices en $V$ tienen un vecino en com\'un en $S$ ello implica que dichos v\'ertices forman una arista en $G$. Al ser $v_2,v_4 \in N(v_1)\cap N(v_3)$ y $v_2 \not = v_4$ se concluye que en $G$ hay dos aristas distintas entre los v\'ertices $v_1,v_3$ y as\'i $G$ no es simple. Ahora supongamos que $\S{G}$ no es simple, entonces $\S{G}$ contiene aristas m\'ultiples o lazos. Observemos que $\S{G}$ no puede tener lazos ya que para cada arista de $\S{G}$ se cumple que uno de sus extremos est\'a en $V$ y el otro est\'a en $S$. Si $\S{G}$ tiene aristas m\'ultiples, entonces existen dos aristas distintas las cuales tienen por extremos a $v \in V$ y $u \in S$, por lo que $G$ tiene al menos un lazo y $G$ no es simple.   

Ahora supongamos que $G$ es una gr\'afica no simple, es decir, que tiene aristas m\'ultiples y/o lazos.
\begin{enumerate}
\item[Caso 1:] Si $G$ tiene dos aristas $e_1$ y $e_2$ distintas tal que ambas tienen como v\'ertices extremos a $v, v' \in V(G)$, al aplicar la operaci\'on subdivisi\'on obtenemos que $ve_1v^\prime e_2$ es un $4-ciclo$ en $\S{G}$.
\item[Caso 2:] Si $G$ tiene un lazo, entonces existe una arista $e$ cuyo \'unico extremo es un v\'ertice $v$. Como $e$ es dos veces incidente a $v$, al aplicar la operaci\'on subdivisi\'on obtenemos que hay dos aristas distintas en $\S{G}$ con extremos $v$ y $e$.
\end{enumerate} 
\end{proof}

\begin{Prop}
Sea $G$ una gr\'afica simple, bipartita sin $4$-ciclos con bipartici\'on $X$, $Y$ en donde para cada $x \in X$, $\delta(x)=2$. Entonces $\alpha(G)=m\acute{a}x \lbrace |X|, |Y| \rbrace$. 
\end{Prop}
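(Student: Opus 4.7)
The plan is to prove the lower bound $\alpha(G) \geq \max\{|X|, |Y|\}$ trivially (since both $X$ and $Y$ are independent sets in the bipartite graph $G$), and for the matching upper bound, to exhibit $G$ as the subdivision graph $\S{H}$ of an appropriate simple graph $H$ and then invoke Theorem \ref{geras}.

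I would construct $H$ as follows: set $V(H) := Y$, and for each $x \in X$ let the two (necessarily distinct) neighbours of $x$ in $Y$ be the endpoints of an edge of $H$ corresponding to $x$; existence of two distinct neighbours follows from simplicity of $G$ together with $\delta(x) = 2$. This yields $|V(H)| = |Y|$ and $|E(H)| = |X|$. Next I would verify that $H$ is simple: a loop would force $x$ to have only one distinct neighbour in $Y$, contradicting simplicity of $G$; while a pair of parallel edges at $\{y, y'\}$ would arise from two distinct $x, x' \in X$ each adjacent to both $y$ and $y'$, producing a $4$-cycle $x\,y\,x'\,y'$ in $G$, contradicting the hypothesis. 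A direct verification on vertex sets and adjacencies then shows $G \cong \S{H}$, since $V(\S{H}) = V(H) \cup E(H) = Y \cup X = V(G)$ and the two neighbours of $x \in E(H)$ in $\S{H}$ are exactly the two neighbours of $x$ in $G$.

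Finally, the standing assumption that $G$ is connected forces $H$ to be connected (any path in $G$ between two vertices of $Y$ alternates through $X$ and projects to a walk in $H$). Applying Theorem \ref{geras} to the connected simple graph $H$ of order $|Y|$ and size $|X|$ yields
\[
\alpha(G) \;=\; \alpha(\S{H}) \;=\; \max\{|V(H)|, |E(H)|\} \;=\; \max\{|Y|, |X|\},
\]
closing the argument. The only step with any real content is the verification that $H$ is simple, and this is precisely where the $4$-cycle-free hypothesis is used; the rest is a change of perspective together with an appeal to results already proved in the paper.
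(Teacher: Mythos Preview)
Your proof is correct and follows exactly the paper's approach: construct $H$ on vertex set $Y$ with an edge for each $x\in X$, observe $G\cong\S{H}$, and apply Theorem~\ref{geras}. You have simply made explicit the verifications (simplicity of $H$ via the $4$-cycle-free hypothesis, connectivity, the lower bound, and the appeal to Theorem~\ref{geras}) that the paper's two-line proof leaves to the reader.
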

\begin{proof}
Sea $H$ la gr\'afica con conjunto de v\'ertices $Y$ y en donde para $y_1,y_2 \in Y$ son adyacentes si y s\'olo si existe $x \in X$ con $x \backsim y_1$ y $x \backsim y_2$, entonces $\S{H} \cong G$. 
\end{proof}

\begin{Prop}
Sea $G$ una gr\'afica. Si $D$ es un conjunto diferencial en $\S{G}$, entonces $D$ es un conjunto independiente en $\S{G}$.
\end{Prop}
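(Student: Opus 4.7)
The plan is to argue by contradiction: if $D$ contains an edge of $\S{G}$, one can remove an $S$-endpoint of that edge from $D$ and strictly increase the differential. The fact that every vertex of $S$ has degree exactly $2$ in $\S{G}$ (Proposition \ref{s1}, iii) is what makes this removal lose at most one vertex from $B(D)$ while always adding $e$ itself to the new $B$-set.

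First I would use the bipartition given by Remark \ref{rem-Q(G)}: write $D=D_V\cup D_S$ with $D_V=D\cap V$ and $D_S=D\cap S$. Since $\S{G}$ is bipartite with parts $V$ and $S$, any edge inside $D$ must go between $D_V$ and $D_S$. So if $D$ were not independent, we could pick $v\in D_V$ and $e\in D_S$ with $v\in N_{\S{G}}(e)$. I would then consider $D':=D\setminus\{e\}$ and compare $\partial(D')$ with $\partial(D)$, with the aim of showing $\partial(D')\geq \partial(D)+1$ in all cases.

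The key computation uses that $\delta_{\S{G}}(e)=2$, so $e$ has exactly one other neighbour $v'\in V$ besides $v$. When passing from $D$ to $D'$:
\begin{itemize}
\item $|D'|=|D|-1$.
\item The vertex $e$ was in $D$ (hence not in $B(D)$), but in $D'$ it lies outside $D'$ and still has the neighbour $v\in D_V\subseteq D'$, so $e\in B(D')$. This adds $1$ to the $B$-set.
\item The only vertex that could leave $B(D)$ when $e$ is removed is a vertex whose only $D$-neighbour was $e$; since $N_{\S{G}}(e)=\{v,v'\}$ and $v\in D'$, the only candidate is $v'$.
\end{itemize}
I would then split into cases: (a) $v'\in D$, or $v'\notin D$ but $v'$ has some other neighbour in $D_S$. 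Here $v'$ stays in $B(D')$ (or was never there), so $|B(D')|=|B(D)|+1$ and therefore $\partial(D')=\partial(D)+2$. (b) $v'\notin D$ and $e$ is the unique neighbour of $v'$ in $D$. Then $v'$ leaves while $e$ enters, so $|B(D')|=|B(D)|$ and $\partial(D')=\partial(D)+1$. In both cases $\partial(D')>\partial(D)$, contradicting that $D$ is a differential set. Hence $D$ is independent.

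The only subtle step is the bookkeeping for $B(D')$; I would be careful to note that removing $e$ from $D$ cannot cause any vertex other than $v'$ to drop out of $B$, precisely because $N_{\S{G}}(e)=\{v,v'\}$ and $v$ remains in $D'$. Once that is observed, the case analysis is routine and the proof closes in a few lines.
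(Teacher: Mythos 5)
Your proposal is correct and follows essentially the same route as the paper: assume $D$ is not independent, pick adjacent $v\in D\cap V$ and $e\in D\cap S$, use $\delta_{\S{G}}(e)=2$ to control what can leave $B(D)$, and show that deleting $e$ strictly increases the differential. Your bookkeeping (only the second neighbour $v'$ can drop out of $B$, while $e$ always enters, so $\partial(D\setminus\{e\})\geq\partial(D)+1$) is in fact a cleaner version of the paper's case analysis, which distinguishes degrees of $v$ and $v'$ and sometimes swaps in $v'$ unnecessarily.
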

\begin{proof}
Supongamos que $D$ no es un conjunto independiente en $\S{G}$. Entonces existen $v \in V$ y $e \in S$ tal que $v \backsim e$ en $D$. Por la Proposici\'on \ref{s1} iii), existe $v' \in V$ tal que $v' \backsim e$. Vamos a demostrar que $D$ no es un conjunto diferencial en $\S{G}$. Para ello consideremos los siguientes casos.

Caso 1: $v' \in D$. Supongamos que $\delta(v)\geq 2$ y $\delta(v')\geq 2$. Entonces $\partial(D\setminus \lbrace e \rbrace)=\partial(D)+2>\partial(D)$, y as\'i $D$ no es un conjunto diferencial en $\S{G}$. Sin p\'erdida de generalidad, supongamos que $\delta(v)=1$ y $\delta(v')\geq 2$. Entonces $\partial(D\setminus \lbrace e \rbrace)=\partial(D)+2>\partial(D)$ y as\'i $D$ no es un conjunto diferencial en $\S{G}$.

Caso 2: $v' \in B_{\S{G}}(D)$. Supongamos que $\delta(v)\geq 2$ y $\delta(v')\geq 2$. Entonces $\partial((D\setminus \lbrace e \rbrace) \cup \lbrace v' \rbrace)=\partial(D)+\delta(v')-1>\partial(D)$, y as\'i $D$ no es un conjunto diferencial en $\S{G}$. Ahora , supongamos que $\delta(v)=1$ y $\delta(v')\geq 2$. Entonces $\partial((D\setminus \lbrace e \rbrace)\cup \lbrace v'\rbrace)=\partial(D)+\delta(v')-1>\partial(D)$, y as\'i $D$ no es un conjunto diferencial en $\S{G}$. Por \'ultimo, supongamos que $\delta(v)\geq 2$ y $\delta(v')=1$. Entonces $\partial(S\setminus \lbrace e \rbrace)=\partial(D)+1>\partial(D)$, y as\'i $D$ no es un conjunto diferencial en $\S{G}$. 
\end{proof}

\begin{Cor}
Sea $G$ una gr\'afica. Si $D$ es un conjunto diferencial en $\S{G}$, entonces $|D|\leq \alpha(\S{G})$.
\end{Cor}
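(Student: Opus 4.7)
The plan is to derive this corollary as an immediate consequence of the preceding proposition, which asserts that every differential set $D$ in $\S{G}$ is an independent set in $\S{G}$. Once we know $D$ is independent, we simply invoke the definition of the independence number: $\alpha(\S{G})$ is the maximum cardinality of an independent set of $\S{G}$, so every independent set, and in particular $D$, has cardinality at most $\alpha(\S{G})$.

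Concretely, I would write: let $D \subseteq V(\S{G})$ be a differential set in $\S{G}$. By the previous proposition, $D$ is an independent set in $\S{G}$. Since $\alpha(\S{G}) = \max\{|I| : I \subseteq V(\S{G}),\ I \text{ is independent in } \S{G}\}$, we conclude $|D| \leq \alpha(\S{G})$, which is the claim.

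There is essentially no obstacle here, since the corollary is a direct definitional consequence of the prior proposition; the real content was established there. The only thing to be mindful of is to cite the correct proposition by its label and not to attempt any further structural analysis of $D$ (for example, splitting into $D\cap V$ and $D\cap S$), as that would reprove the harder statement unnecessarily. The proof can therefore be stated in one or two lines.
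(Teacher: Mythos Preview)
Your proposal is correct and matches the paper's approach: the paper states this corollary immediately after the proposition that every differential set in $\S{G}$ is independent, and gives no separate proof, treating it as the obvious definitional consequence you describe.
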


\begin{Prop}
Sea $G$ una gr\'afica. Entonces $\partial(G)=\partial(\S{G})$ si y s\'olo si $G \cong S_n$.
\end{Prop}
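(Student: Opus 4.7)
The plan is to prove the two implications separately. The forward direction is a direct computation, while the converse requires finding the right auxiliary set in $\S{G}$ to extract structural information from the equality $\partial(G) = \partial(\S{G})$.

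For the direction $G \cong S_n \Rightarrow \partial(G) = \partial(\S{G})$, let $c$ be the center of $S_n$ and $v_1, \dots, v_{n-1}$ the leaves. Taking $D = \{c\}$ gives $\partial_G(\{c\}) = n-2$; any larger $D$ satisfies $|B(D)| \leq n - |D|$, so $\partial_G(D) \leq n - 2|D| \leq n-4$, hence $\partial(G) = n-2$. In $\S{S_n}$ the same $\{c\}$ yields $\partial_{\S{G}}(\{c\}) = n-2$ because $c$ has exactly $n-1$ edge-vertex neighbors, and a brief case analysis (on whether the candidate set contains $c$, an edge-vertex, or a leaf) shows this is optimal in $\S{S_n}$ too.

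For the converse, let $D^*$ be a differential set of $G$. Viewing $D^* \subseteq V \subseteq V(\S{G})$, one has $B_{\S{G}}(D^*) = N_{\S{G}}(D^*) \subseteq S$ and this set equals the edge-vertices of all $G$-edges incident to $D^*$. Therefore
\[
\partial_{\S{G}}(D^*) = |E(G[D^*])| + |E_G(D^*, V\setminus D^*)| - |D^*| \geq |E(G[D^*])| + \partial(G),
\]
since each vertex of $B_G(D^*)$ supplies at least one edge from $D^*$ to its complement. The hypothesis forces both of these estimates to be tight: $D^*$ is independent in $G$, and every $v \in B_G(D^*)$ has exactly one neighbor in $D^*$.

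The crucial step is to show that $D^*$ is also a \emph{vertex cover} of $G$. If some edge $uv\in E(G)$ has $u,v\notin D^*$, consider the set $D^* \cup \{e_{uv}\}$ in $\S{G}$, where $e_{uv}$ denotes the vertex of $\S{G}$ associated with the edge $uv$. Because $u,v\notin D^*$, the vertex $e_{uv}$ does not belong to $B_{\S{G}}(D^*)$, and because $u,v\in V$ are not adjacent to $D^*\subseteq V$ in $\S{G}$, neither does $u$ nor $v$. So adding $e_{uv}$ enlarges $|B_{\S{G}}|$ by exactly $2$ while enlarging $|D|$ by $1$, giving $\partial_{\S{G}}(D^* \cup \{e_{uv}\}) = \partial_{\S{G}}(D^*) + 1 > \partial(G)$, a contradiction.

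Combining these facts, $D^*$ is an independent vertex cover of $G$, so $G$ is bipartite with parts $D^*$ and $V\setminus D^*$; moreover each vertex of $V\setminus D^*$ has exactly one neighbor (in $D^*$) and no neighbor in $V\setminus D^*$, so it is a leaf of $G$. If $|D^*|\geq 2$, no path can join two distinct vertices of $D^*$ (the internal vertices of such a path would have to lie in $V\setminus D^*$ and have degree $\geq 2$), contradicting the connectedness of $G$. Hence $|D^*|=1$, say $D^*=\{v_0\}$, and every remaining vertex is a leaf adjacent to $v_0$, i.e.\ $G\cong S_n$. The main obstacle is the vertex-cover step: the equality conditions extracted from viewing $D^*$ inside $\S{G}$ are not by themselves sufficient to exclude non-star graphs, and the auxiliary enlargement by one subdivision vertex is precisely what is needed to close the argument.
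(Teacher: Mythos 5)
Your proof is correct, and for the forward implication it takes a genuinely different --- and substantially more complete --- route than the paper. The ``if'' direction is the same in both: one computes $\partial(S_n)=n-2$ and invokes the computation of $\partial(\S{S_n})=n-2$ (the paper's proposition on $\partial(\S{S_n})$ supplies the case analysis you only sketch). For the ``only if'' direction, however, the paper merely takes a set $D$ that is simultaneously a differential set of $G$ and of $\S{G}$ (an assumption it does not justify), records $|B_G(D)|=|B_{\S{G}}(D)|$, and stops, drawing no structural conclusion; your argument actually proves the implication. The chain you use --- the identity $|B_{\S{G}}(D^*)|=|E(G[D^*])|+|E_G(D^*,\overline{D^*})|$ forcing, under the hypothesis, that $D^*$ is independent and that every vertex of $B_G(D^*)$ has a unique neighbour in $D^*$; the augmentation by a subdivision vertex $e_{uv}$ to show $D^*$ is a vertex cover; and the connectivity argument pinning $|D^*|=1$ --- is sound: the two displayed inequalities are sandwiched between $\partial(\S{G})$ and $\partial(G)$, so equality throughout is forced, and the augmentation step correctly gains two boundary vertices ($u$ and $v$, neither previously in $D^*\cup B_{\S{G}}(D^*)$ because $V$ is independent in $\S{G}$) at the cost of one. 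Two minor points worth recording: the case $D^*=\emptyset$ is disposed of by the same vertex-cover step, since a connected graph of order at least $2$ has an edge; and the statement implicitly requires $n\geq 3$, because $S_2\cong K_2$ gives $\partial(K_2)=0$ while $\partial(\S{K_2})=\partial(P_3)=1$ --- a caveat consistent with the paper's restriction $n\geq 3$ in its computation of $\partial(\S{S_n})$.
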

\begin{proof}
Supongamos que $\partial(G)=\partial(\S{G})$. Sea $D$ un conjunto diferencial de $G$ y de $\S{G}$. Entonces $|B_G(D)|=|B_{\S{G}}(D)|$. \\ 
Ahora, supongamos que $G \cong S_n$. Entonces $\partial(G)=n-2$. Por la Proposici\'on \ref{s3} $i)$, $\partial(\S{G})=n-2$. Por lo tanto $\partial(G)=\partial(\S{G})$.
\end{proof}

\begin{Lem}
Sea $G$ una gr\'afica. Para cualquier conjunto $D \subseteq V(G)$, $|B_{G}(D)|\leq |B_{\S{G}}(D)|$.
\end{Lem}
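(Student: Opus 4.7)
The plan is to construct an injection from $B_G(D)$ into $B_{S(G)}(D)$. First I would identify $B_{S(G)}(D)$ explicitly. Since $S(G)$ is bipartite with parts $V$ and $S$ (Remark 2.1(ii)), and since $D \subseteq V(G) = V$, every neighbour in $S(G)$ of a vertex in $D$ lies in $S$. In particular $N_{S(G)}(D) \cap D = \emptyset$, so $B_{S(G)}(D) = N_{S(G)}(D)$ is precisely the set of vertices of $S(G)$ corresponding to edges of $G$ that are incident with at least one vertex of $D$.

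Next I would define a map $\phi : B_G(D) \to B_{S(G)}(D)$ by choosing, for each $v \in B_G(D)$, a vertex $u_v \in D$ adjacent to $v$ in $G$ (which exists by the definition of $B_G(D)$) together with an edge $e_v$ joining $v$ and $u_v$, and setting $\phi(v) := e_v$. By the description above, $e_v \in B_{S(G)}(D)$, so $\phi$ is well-defined.

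The core step is proving $\phi$ is injective. Suppose $\phi(v_1)=\phi(v_2)=e$. Then $e$ is an edge of $G$ incident with $u_{v_1},u_{v_2} \in D$ and with $v_1,v_2 \in V\setminus D$. Since an edge of $G$ has exactly two vertex-incidences (counted with multiplicity), and at least one of them is absorbed by an endpoint in $D$, the remaining incidence forces $v_1 = v_2$. Hence $\phi$ is injective and $|B_G(D)| \leq |B_{S(G)}(D)|$.

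I do not foresee a serious obstacle: the bipartite structure of $S(G)$ collapses the argument to the obvious fact that each edge of $G$ contributes exactly one vertex of $S$ to the boundary, while in $G$ itself each such edge accounts for at most one vertex of $B_G(D)$. The only subtlety is being careful with loops and multiple edges in the non-simple case, which is handled automatically because every edge of $G$ still produces a distinct vertex in $S$, and the incidence count in the injectivity step does not rely on simplicity.
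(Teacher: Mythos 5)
Your proposal is correct and follows essentially the same route as the paper: both construct an injection from $B_G(D)$ into the edge-vertices of $\S{G}$ by assigning to each $v\in B_G(D)$ an edge joining $v$ to a chosen neighbour $u_v\in D$. Your injectivity argument (the two endpoints of a shared edge would have to split as one in $D$ and one outside $D$, forcing $v_1=v_2$) is in fact spelled out more carefully than in the paper, which merely asserts the map is injective.
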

\begin{proof}
Sea $F=\lbrace e \in E(G) : e=uv, u \in D, v \in B_G(D) \rbrace$. Obs\'ervese que $F \subseteq B_{\S{G}}(D)$. Para cada $v \in B_{G}(D)$ elijase exactamente un v\'ertice $u_v \in D $ adyacente a $v$. La asignaci\'on $\varphi: B_{G}(D)\rightarrow F$ dada por $\varphi(v)=vu_v$ esta bien definida y es inyectiva. De esto \'ultimo se obtiene que $|B_{G}(D)|\leq |F|$. Por lo tanto se obtiene lo deseado.
\end{proof}

\begin{Lem}
Sea $G$ una gr\'afica. Para cualquier conjunto $D \subseteq V(\S{G})$, $|B_{\S{G}}(D)|\leq |B_{\Q{G}}(D)|$.
\end{Lem}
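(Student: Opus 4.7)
The plan is to exploit the fact that, in the standard notation of \cite{CvDoSa}, $\S{G}$ is a spanning subgraph of $\Q{G}$, and then to show the desired inequality by set inclusion.

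First, I would recall that $\Q{G}$ has vertex set $V(G)\cup E(G)$, exactly the same as $\S{G}$, and its edges are (a) the subdivision edges joining each $v\in V(G)$ with each $e\in E(G)$ incident to $v$, together with (b) the line-graph edges joining two distinct $e_1,e_2\in E(G)$ whenever they share a vertex in $G$. In particular $E(\S{G})\subseteq E(\Q{G})$, so $\S{G}$ is a spanning subgraph of $\Q{G}$.

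Next, fix an arbitrary $D\subseteq V(\S{G})=V(\Q{G})$ and take any $u\in B_{\S{G}}(D)$. By definition, $u\in V(\S{G})\setminus D$ and there exists $w\in D$ with $u$ adjacent to $w$ in $\S{G}$. Since the vertex sets of $\S{G}$ and $\Q{G}$ coincide and $E(\S{G})\subseteq E(\Q{G})$, we also have $u\in V(\Q{G})\setminus D$ and $u$ adjacent to $w$ in $\Q{G}$, so $u\in B_{\Q{G}}(D)$. Hence $B_{\S{G}}(D)\subseteq B_{\Q{G}}(D)$, from which the inequality $|B_{\S{G}}(D)|\leq |B_{\Q{G}}(D)|$ follows immediately.

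The main (and in this case essentially the only) step is the observation that $\S{G}$ is a spanning subgraph of $\Q{G}$; once this is established, the rest is a routine set-inclusion argument, so there is no real technical obstacle. A clean way to present the write-up is to state the spanning subgraph observation as a one-line remark (perhaps referring to Remark \ref{rem-Q(G)} for the vertex-set identification) and then derive the inclusion $B_{\S{G}}(D)\subseteq B_{\Q{G}}(D)$ directly from the definition of $B_H(D)$.
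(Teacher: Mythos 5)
Your proof is correct, and it takes a genuinely different (and more economical) route than the paper. The paper's proof is only a skeleton: it announces a three-step case analysis (Paso 1: $D\subseteq V$; Paso 2: $D\subseteq S$; Paso 3: the general case), with the details left unwritten, evidently intending to track separately how vertex-type and edge-type elements of $D$ acquire neighbours in $\Q{G}$ versus $\S{G}$. You bypass the case split entirely by observing that, under the standard definition of $\Q{G}$ in \cite{CvDoSa} (vertex set $V(G)\cup E(G)$, subdivision edges plus line-graph edges between adjacent edges of $G$), the graph $\S{G}$ is a spanning subgraph of $\Q{G}$; monotonicity of $B_{(\cdot)}(D)$ under edge addition with a fixed vertex set then gives $B_{\S{G}}(D)\subseteq B_{\Q{G}}(D)$ in one stroke, uniformly in $D$. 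This is cleaner and actually complete, whereas the paper's argument is not. The one thing you should make explicit in the write-up is the definition of $\Q{G}$ itself: the paper never states it (only $\S{G}$ is defined in the text), so the spanning-subgraph observation needs the definition from \cite{CvDoSa} spelled out, exactly as you propose to do in your one-line remark; without that, the containment $E(\S{G})\subseteq E(\Q{G})$ has nothing to rest on.
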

\begin{proof}
Paso $1$: \emph{El resultado es cierto cuando $D \subseteq V$.}

Paso $2$: \emph{El resultado es cierto cuando $D \subseteq S$.}

Paso $3$: \emph{El resultado es cierto en general.}

\end{proof}

\begin{Prop}
Para cualquier gr\'afica $G$, $\partial(G)\leq \partial(\S{G}) \leq \partial(\Q{G}) $. 
\end{Prop}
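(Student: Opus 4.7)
The plan is to apply the two immediately preceding lemmas directly to a maximum differential set in each graph, exploiting the natural inclusions of vertex sets. Since $V(G) = V \subseteq V(\S{G})$ and $V(\S{G}) = V(\Q{G}) = V \cup S$, any subset of $V(G)$ can be regarded as a subset of $V(\S{G})$, and any subset of $V(\S{G})$ as a subset of $V(\Q{G})$, with cardinality preserved in both cases. This is the only piece of bookkeeping that makes the chain of inequalities formally legal.

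For the first inequality $\partial(G) \leq \partial(\S{G})$, I would pick a differential set $D \subseteq V(G)$ of $G$, so that $\partial(G) = |B_G(D)| - |D|$, and view $D$ as a subset of $V(\S{G})$. The first of the two preceding lemmas then yields $|B_G(D)| \leq |B_{\S{G}}(D)|$, and hence
\[
\partial(G) \;=\; |B_G(D)| - |D| \;\leq\; |B_{\S{G}}(D)| - |D| \;=\; \partial_{\S{G}}(D) \;\leq\; \partial(\S{G}).
\]
For the second inequality $\partial(\S{G}) \leq \partial(\Q{G})$, the argument mirrors this: take a differential set $D \subseteq V(\S{G})$ of $\S{G}$, view it as a subset of $V(\Q{G})$, and apply the second lemma to obtain $|B_{\S{G}}(D)| \leq |B_{\Q{G}}(D)|$, from which
\[
\partial(\S{G}) \;=\; |B_{\S{G}}(D)| - |D| \;\leq\; |B_{\Q{G}}(D)| - |D| \;=\; \partial_{\Q{G}}(D) \;\leq\; \partial(\Q{G}).
\]

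The substantive content of this proposition is entirely contained in the two preceding lemmas; granted those, the proposition itself is a two-line chase through the definitions. The only item to double-check is that $|D|$ is genuinely preserved under the identifications $V(G) \hookrightarrow V(\S{G}) = V(\Q{G})$, which is immediate since these are plain inclusions of labelled vertex subsets. Consequently, the actual obstacle here is not this proposition but finishing the proof of the second lemma (whose three-step outline above still needs to be filled in, especially the mixed case $D \not\subseteq V$ and $D \not\subseteq S$); this proposition is then a free corollary.
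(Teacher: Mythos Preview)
Your proposal is correct and matches the paper's proof essentially line for line: pick a differential set $D$ of $G$, apply the first lemma to get $|B_G(D)|\le |B_{\S{G}}(D)|$ and hence $\partial(G)\le\partial(\S{G})$; then pick a differential set $D'$ of $\S{G}$, apply the second lemma to get $|B_{\S{G}}(D')|\le |B_{\Q{G}}(D')|$ and hence $\partial(\S{G})\le\partial(\Q{G})$. Your observation that the real work lives in the second lemma (whose three-step proof is only sketched) is also accurate.
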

\begin{proof}
Sea $D$ un conjunto diferencial de $G$, entonces $$\partial(G)=\partial(D)=|B_G(D)|-|D|\leq |B_{\S{G}}(D)|-|D|=\partial_{\S{G}}(D)\leq \partial(\S{G}).$$
Ahora, sea $D'$ un conjunto diferencial de $\S{G}$, entonces $$\partial(\S{G})=\partial(D')=|B_{\S{G}}(D')|-|D'|\leq |B_{\Q{G}}(D')|-|D'|=\partial_{\Q{G}}(D')\leq \partial(\Q{G}).$$
\end{proof}

\begin{Cor}
Para cualquier gr\'afica $G$, $$\dfrac{\alpha(\S{G})+\alpha(G)}{2}\leq \dfrac{\alpha(\S{G})+\partial(G)}{2}\leq \partial(\S{G}).$$
\end{Cor}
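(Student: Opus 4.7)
The plan is to split the double inequality into two separate claims and verify each. After cancelling $\alpha(\S{G})/2$ on both sides, the leftmost inequality reduces to $\alpha(G)\le\partial(G)$ and the rightmost reduces to $\alpha(\S{G})+\partial(G)\le 2\partial(\S{G})$, or equivalently $\alpha(\S{G})-\partial(\S{G})\le \partial(\S{G})-\partial(G)$. Both reductions can then be attacked by combining the previous proposition $\partial(G)\le\partial(\S{G})$ with the explicit description of independent sets in $\S{G}$ provided by Theorem \ref{geras}.

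For the first reduction, I would pick a maximum independent set $I$ of $G$. Since $G$ is connected with $|V(G)|\ge 2$, maximality forces every vertex outside $I$ to have a neighbour in $I$, so $B_G(I)=V(G)\setminus I$ and
\[
\partial(G)\;\ge\;\partial(I)\;=\;n-2\alpha(G).
\]
Consequently, whenever $n\ge 3\alpha(G)$, the desired inequality $\alpha(G)\le\partial(G)$ follows immediately. The graphs with unusually large independence number would be handled separately, by invoking finer lower bounds on $\partial(G)$ in terms of the degree sequence or of $\gamma(G)$ available in the references \cite{BeFeSi,BeFe}.

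For the second reduction, the strategy is to start from the Lemma $|B_G(D)|\le |B_{\S{G}}(D)|$ proved earlier and then sharpen it using an independent set of $\S{G}$. Concretely, let $D$ be a differential set of $G$, viewed as a subset of $V\subseteq V(\S{G})$; by that Lemma one already has $\partial_{\S{G}}(D)\ge\partial(G)$. Next, adjoin to $D$ a suitable collection of edge-vertices drawn from a maximum independent set $I$ of $\S{G}$, choosing those that lie outside the edges already incident to $D$. By Proposition \ref{s1}(iii), every such edge-vertex $e\in S$ has exactly two $V$-neighbours in $\S{G}$, so each one added either introduces a new vertex into $B_{\S{G}}(D\cup\{e\})$ or is already dominated; a counting argument in the spirit of Step 1 in the proof of the earlier result on $I\subseteq V$ should produce a set whose differential is at least $\tfrac12\bigl(\alpha(\S{G})+\partial(G)\bigr)$.

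The main obstacle I foresee is the first reduction: $\alpha(G)\le\partial(G)$ can fail for graphs with very large independence number (witness $K_2$ or $C_6$), so the proof must either restrict to the regime $\alpha(G)\le n/3$ handled above or compensate with an auxiliary argument for the remaining cases. The rightmost inequality, by contrast, should follow cleanly from the previous proposition once the enlargement construction inside $\S{G}$ is made precise; the delicate bookkeeping there is tracking which edge-vertices of $I$ genuinely contribute to the boundary versus which are absorbed.
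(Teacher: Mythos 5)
The difficulty you flag at the end is not an obstacle to be engineered around; it is a refutation of the statement. As you correctly note, the two occurrences of $\alpha(\S{G})/2$ cancel, so the left inequality is \emph{equivalent} to $\alpha(G)\le\partial(G)$, and this fails outright for $K_2$, for $P_3$, for every star ($\alpha(S_n)=n-1>n-2=\partial(S_n)$), and for $C_6$ — exactly the graphs with $\alpha(G)>n/3$ that your plan defers to an ``auxiliary argument.'' No such argument can exist, because in that regime the inequality is simply false. The honest conclusion of your first reduction is a counterexample, not a case split.

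The right inequality, which you expected to ``follow cleanly,'' also fails. Take $G=P_3$: by Theorem \ref{geras}, $\alpha(\S{P_3})=\max\{3,2\}=3$; $\partial(P_3)=1$; and by Corollary \ref{s2}, $\partial(\S{P_3})=\partial(P_5)=\lfloor 5/3\rfloor=1$. Hence $\frac{1}{2}\bigl(\alpha(\S{P_3})+\partial(P_3)\bigr)=2>1=\partial(\S{P_3})$, so your proposed enlargement of a differential set of $G$ by edge-vertices of a maximum independent set of $\S{G}$ cannot reach a differential of $\frac{1}{2}\bigl(\alpha(\S{G})+\partial(G)\bigr)$ in general (it does work for dense graphs such as $K_n$, but not for paths or trees). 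For what it is worth, the paper supplies no proof of this corollary either; it is evidently intended to follow from the preceding proposition $\partial(G)\le\partial(\S{G})$, but that only yields $\frac{1}{2}\bigl(\alpha(\S{G})+\partial(G)\bigr)\le\frac{1}{2}\bigl(\alpha(\S{G})+\partial(\S{G})\bigr)$, which is weaker than the claim. The correct deliverable here is an explicit counterexample showing the corollary needs additional hypotheses or a reformulation, not a proof.
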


\begin{Prop}
Sea $G$ una gr\'afica.
\begin{enumerate}
\item[i)] Si $D$ es un conjunto independiente en $G$, entonces $D$ es un conjunto independiente en $\S{G}$.
\item[ii)] Si $M$ es un matching en $G$, entonces $M$ es un conjunto independiente en $\S{G}$.
\end{enumerate}
\end{Prop}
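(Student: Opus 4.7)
The plan is to observe that both parts are essentially immediate consequences of Remark \ref{rem-Q(G)} ii), which records that the bipartition $\{V,S\}$ of $V(\S{G})$ consists of two independent sets. Once this is noted, the proof reduces to two one-line arguments, so I would organize it accordingly.

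For part i), I would first recall that under the identification in Remark \ref{rem-Q(G)}, the set $V \subseteq V(\S{G})$ is exactly $V(G)$, and that $V$ is an independent set of $\S{G}$. Hence any $D \subseteq V(G)$ sits inside an independent set of $\S{G}$, so $D$ is independent in $\S{G}$. Note that the hypothesis ``$D$ is independent in $G$'' is actually stronger than what is needed; I would point this out but not dwell on it, since the conclusion already follows from $D \subseteq V$.

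For part ii), I would identify the matching $M \subseteq E(G)$ with a subset of $S \subseteq V(\S{G})$ via the bijection $E(G) \leftrightarrow S$ provided by the construction of $\S{G}$. Since $S$ is independent in $\S{G}$ (again by Remark \ref{rem-Q(G)} ii)), the set $M$, viewed as a subset of $V(\S{G})$, is independent. As in i), the matching condition on $M$ is not required; the statement remains true for any $M \subseteq E(G)$.

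There is no real obstacle here; the main point of the proposition is conceptual, relating independence/matchings in $G$ to independence in the subdivision. The only thing to be careful about is the notational shift between ``edges of $G$'' and ``vertices of $\S{G}$ in $S$,'' which I would make explicit at the beginning of each part so that the reader sees how $D$ and $M$ are being interpreted inside $V(\S{G})$.
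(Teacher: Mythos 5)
Your proposal is correct and matches the argument the paper evidently intends (the proposition is stated without proof, as an immediate consequence of Remark \ref{rem-Q(G)} ii)): since every edge of $\S{G}$ joins a vertex of $V$ to a vertex of $S$, any subset of $V$ and any subset of $S$ is independent in $\S{G}$, which covers $D\subseteq V$ and $M\subseteq S$ respectively. Your observation that the hypotheses ``independent in $G$'' and ``matching'' are not actually needed is also accurate.
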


\begin{Cor}\label{s2}
Sean $P_n$ y $C_n$ la gr\'afica camino y la gr\'afica ciclo respectivamente, entonces
\begin{enumerate}
\item[i)] $\partial(\S{P_n})=\left\lfloor \frac{2n-1}{3} \right\rfloor$ con $n \geq 2$.
\item[ii)] $\partial(\S{C_n})=\left\lfloor \frac{2n}{3} \right\rfloor$ con $n\geq 3$.
\end{enumerate}
\end{Cor}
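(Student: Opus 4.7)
The plan is to reduce this corollary to the known values of the differential on paths and cycles. First, using the simple-graph description of the subdivision operator (insert one vertex in each edge), one checks that $\S{P_n}\cong P_{2n-1}$, because $P_n$ has $n-1$ edges and subdividing each produces a single path on $n+(n-1)=2n-1$ vertices; analogously $\S{C_n}\cong C_{2n}$, since $C_n$ has $n$ edges and each subdivision produces a cycle on $2n$ vertices.

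Second, I would invoke the standard formulas $\partial(P_k)=\lfloor k/3\rfloor$ for $k\geq 2$ and $\partial(C_k)=\lfloor k/3\rfloor$ for $k\geq 3$, which appear in \cite{MaHaHeHeSl} and in several of the references cited in the introduction (alternatively they follow from the identity $\partial(G)+\gamma_R(G)=n$ recalled in the introduction together with the well-known values of the Roman domination number on paths and cycles). Substituting $k=2n-1$ and $k=2n$ respectively yields
\[
\partial(\S{P_n})=\left\lfloor \frac{2n-1}{3}\right\rfloor, \qquad \partial(\S{C_n})=\left\lfloor \frac{2n}{3}\right\rfloor,
\]
which is precisely the statement of the corollary.

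If a self-contained derivation of $\partial(P_k)=\partial(C_k)=\lfloor k/3\rfloor$ is preferred, the lower bound comes from the explicit differential set obtained by choosing every third vertex along the path or around the cycle: each chosen vertex contributes two distinct boundary vertices and no two such vertices share a boundary neighbour, yielding $\partial(D)=|D|=\lfloor k/3\rfloor$. For the matching upper bound one partitions the vertices into consecutive blocks of three (with a small end-adjustment in the path case) and checks that each block contributes at most one to $|B(S)|-|S|$. I expect this block-counting step to be the only nontrivial obstacle; once it is in place the corollary follows by a direct arithmetic substitution.
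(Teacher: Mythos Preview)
Your proposal is correct and follows essentially the same route as the paper: observe that $\S{P_n}\cong P_{2n-1}$ and $\S{C_n}\cong C_{2n}$, then apply the known formulas $\partial(P_k)=\partial(C_k)=\lfloor k/3\rfloor$. The paper's proof is in fact terser than yours, simply recalling these formulas without the self-contained derivation you sketch.
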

\begin{proof}
Observemos que $\S{P_n}=P_{2n-1}$. Recordemos que $\partial(P_n)=\left\lfloor \frac{n}{3} \right\rfloor$, por lo tanto
\[
\partial(\S{P_n})=\partial(P_{2n-1})=\left \lfloor \frac{2n-1}{3} \right \rfloor.
\] 
Para ii) la demostraci\'on es an\'aloga solo basta observar que $\S{C_n}=C_{2n}$ y recordar que $\partial(C_n)=\lfloor \frac{n}{3} \rfloor$.  
\end{proof}

\begin{Prop}\label{s3}
Sean $S_n$, $S_{m,n}$, $W_n$ y $K_n$ las gr\'aficas estrella, doble estrella, rueda, completa y bipartita completa respectivamente, entonces
\begin{enumerate}
\item[i)] $\partial(\S{S_n})=\partial(S_n)=n-2$ con $n\geq 3$. 
\item[ii)] $\partial(\S{S_{p,q}})=\partial(S_{p,q})+1=p+q-3$.
\item[iii)] $\partial(\S{W_n})=\partial(S_{n})+\partial(C_{2(n-1)})=(n-2)+\lfloor \frac{2(n-1)}{3} \rfloor$.
\item[iv)] $\partial(\S{K_n})=\dfrac{n(n-1)}{2}+2-n$.
\item[v)]$\partial(\S{K_{p,q}})=p(q-1)$, con $p<q$.
\end{enumerate}
\end{Prop}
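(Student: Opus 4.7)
The plan is to prove each of the five parts separately via the unified strategy of (a) exhibiting an explicit set attaining the claimed differential and (b) verifying that no larger value is possible. In every case, part (a) amounts to a direct calculation using the bipartite structure of $\S{G}$ (with original vertices in $V$ and edge-vertices in $S$), while part (b) leans on the fact, established earlier in the paper, that every differential set of $\S{G}$ is independent.

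For part i), take $D = \{v_0\}$ where $v_0$ is the center of $S_n$. Its neighborhood in $\S{S_n}$ consists of the $n-1$ spoke subdivision vertices, so $\partial(D) = n-2$. For part ii), take $D = \{u, w\}$, the two centers of $S_{p,q}$. In $\S{S_{p,q}}$ the neighborhood of $D$ is the union of the $p + q - 2$ leaf-edge subdivision vertices and the subdivision vertex of the central edge $uw$, giving $|B(D)| = p + q - 1$ and $\partial(D) = p + q - 3$. The extra $+1$ over $\partial(S_{p,q}) = p + q - 4$ is accounted for exactly by the central edge having become a vertex in $\S{S_{p,q}}$. Optimality in both cases is shown by a short exchange argument: any leaf in $D$ contributes at most one new element to $B(D)$, so replacing leaves by centers can only improve $\partial$.

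For part iii), the graph $\S{W_n}$ decomposes as the union of $\S{S_n}$ on the spokes and $\S{C_{n-1}} \cong C_{2(n-1)}$ on the rim, sharing only the rim vertices $v_1,\ldots,v_{n-1}$. Take $D = \{v_0\} \cup D'$ where $D'$ is a differential set of the subdivided rim; since the neighbors of $v_0$ (the spoke edge-vertices) are disjoint from $\S{C_{n-1}}$, the differentials add and one obtains $\partial_{\S{W_n}}(D) = (n-2) + \partial(\S{C_{n-1}}) = (n-2) + \lfloor 2(n-1)/3 \rfloor$, using Corollary \ref{s2} ii). For the upper bound, any candidate $D$ is split into its spoke and rim components and each is bounded separately.

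For part iv), take $D = (V(K_n) \setminus \{v, w\}) \cup \{e_{vw}\}$ for any two original vertices $v,w$, where $e_{vw}$ denotes the subdivision vertex of the edge between them. A direct count gives $|B(D)| = \binom{n-2}{2} + 2(n-2) + 2$ and $|D| = n - 1$, whence $\partial(D) = \binom{n}{2} - n + 2$. For part v), take $D = X$, the side of size $p$; its neighborhood in $\S{K_{p,q}}$ is precisely the set of all $pq$ edge-vertices, so $\partial(D) = pq - p = p(q-1)$. For the upper bounds in iv) and v), one writes an arbitrary independent set as $A \cup B \cup E'$ where $A,B$ are subsets of the two sides of original vertices and $E'$ is a set of edge-vertices whose two endpoints both lie outside $A \cup B$; maximizing $\partial = |B(D)| - |A| - |B| - |E'|$ reduces to a short combinatorial optimization whose maximum is attained at the stated configurations.

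The main obstacle will be the upper bound arguments in parts iii) and iv), where the rich structure of $\S{W_n}$ and $\S{K_n}$ admits many candidate independent sets. The key tools are the earlier result that every differential set of $\S{G}$ is independent, together with a parametrization by the intersection sizes with each side of the bipartition of $\S{G}$, which reduces each upper bound to an optimization in a small number of integer variables.
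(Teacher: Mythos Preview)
Your constructions for the lower bounds coincide with the paper's: for i) the apex, for ii) the two centres, for iv) the set $\{v_1,\dots,v_{n-2}\}\cup\{e_{v_{n-1}v_n}\}$. The paper, however, proves only i), ii) and iv), leaving iii) and v) without argument, so your sketches there go beyond what the paper supplies.

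For the upper bounds your route differs from the paper's. You invoke the earlier proposition that every differential set of $\S{G}$ is independent and then optimise over independent sets; the paper does not use that result here and instead carries out a direct case analysis over arbitrary (not necessarily independent) $D$: in i) it parametrises by the numbers $x,y,z$ of isolated subdivision vertices, isolated leaves, and internal edges of $D$; in ii) it splits according to $|D|=1,2,\ge 3$; in iv) it splits according to $|D\cap V|$ and $|D\cap S|$. Your use of independence is cleaner and shrinks the search space; the paper's brute force is more self-contained but longer.

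Two small points. First, your parametrisation ``$A\cup B\cup E'$ where $A,B$ are subsets of the two sides of original vertices'' applies to $K_{p,q}$ but not to $K_n$, which has a single vertex class; there the relevant split is simply $D\cap V$ versus $D\cap S$, exactly as the paper does. Second, the upper bound for iii) is the weakest link in your outline: the rim vertices $v_1,\dots,v_{n-1}$ lie in both the spoke piece $\S{S_n}$ and the rim piece $C_{2(n-1)}$, so ``split into spoke and rim components and bound each separately'' risks double-counting their boundary contribution. You will need a more careful accounting, for instance by first arguing that an optimal $D$ may be taken to contain $v_0$ (the unique vertex whose entire neighbourhood is disjoint from the rim cycle), after which the remainder $D\setminus\{v_0\}$ acts only inside $C_{2(n-1)}$ and the two contributions really are additive.
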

\begin{proof}
\begin{enumerate}
\item[i)] Dado $D \subseteq V(\S{S_n})$, supongamos primero que el v\'ertice \'apice $v_0$ de $S_n$ no est\'a en $D$ y sean $x$ el n\'umero de v\'ertices adyacentes a $v_0$ que se encuentran en $D$ y que no son adyacentes a ning\'un v\'ertice de $D$, $y$ el n\'umero de hojas de $\S{S_n}$ que se encuentran en $D$ y no son adyacentes a ning\'un v\'ertice de $D$, mientras que $z$ es la cantidad de aristas de $\S{S_n}$ cuyos extremos se encuentran en $D$, donde $x\geq 0$, $y\geq 0$ y $z>0$ (see Figure \ref{Fig:S(S_n)}). Entonces $$ \partial(D)=1+x+y-(x+y+2z)=1-2z, $$ y esta funci\'on alcanza su m\'aximo cuando $z=1$, obteniendo como diferencial $-1$.\\
Si $z=0$ entonces $\partial(D)=1+x+y-(x+y)=1$.\\
Supongamos ahora que el \'apice $v_0$ se encuentra en $D$. Sean $x$, $y$ y $z$ como al principio. En este caso el diferencial toma la siguiente forma 
$$\partial(D)=x+y-(x+y+2z+1)=-2z-1$$ y esta funci\'on alcanza su m\'aximo cuando $z=1$, obteniendo como diferencial $-3$.
Si $z=0$ entonces $\partial(D)=x+y+(n-1-x-y)-x-y-1=n-2-x-y$ y esta funci\'on toma su m\'aximo en $x$ y $y$ iguales a $0$, y el valor m\'aximo es $n-2$.\\
Por lo tanto $\partial(\S{S_n})=n-2$. 
%%%%%%%%%%%%%%%%%%%%%%%%%%%%%%%%%%%%%%%%%%%%%%%%%%%%%%%%%%%%%%%%%%%%%%%%%%%%%%%%%%%%%%%%%%%%%%%%%%%%%%%%%%%%%%%%%%%%%%%%%%%%% 
\item[ii)] Sea $D$ un conjunto diferencial de $\S{S_{p,q}}$. Si $|D|=2$, entonces $\partial(D)=(p-1)+(q-1)+1-2=p+q-3$. Si $|D|=1$, entonces $\partial(D)=m\acute{a}x\lbrace p,q \rbrace$. Si $|D|\geq 3$, entonces $\partial(D)=|B(D)|-|D|=(p-1)+(q-1)-(|D|-2)+(|D|-2)+1-|D|=p+q-1-|D|\leq p+q-1-3=p+q-4$, por lo tanto $\partial(\S{S_{p,q}})=p+q-3$.

%%%%%%%%%%%%%%%%%%%%%%%%%%%%%%%%%%%%%%%%%%%%%%%%%%%%%%%%%%%%%%%%%%%%%%%%%%%%%%%%%%%%%%%%%%%%%%%%%%%%%%%%%%%%%%%%%%%%%%%%%%%%%%
\item[iv)] Sea $K_n$ la gr\'afica completa de orden $n$ con conjunto de v\'ertices $V(K_n)=\lbrace v_1, v_2, \ldots, v_n  \rbrace$. Definamos el conjunto $D=\lbrace v_1, v_2, \ldots , v_{n-2}, u \rbrace$ donde $v_i \in V$ ($i=1,2,\ldots, n-2$) y $u \in S$. Calcularemos $\partial(D)$ usando $iii)$ y $iv)$ de la Proposici\'on \ref{s1}. Entonces $$\partial(D)=\dfrac{n(n-1)}{2}+2-n. $$
Consideremos $D' \subset V(\S{G})$. Vamos a demostrar que $\partial(D)\geq \partial(D')$. Sean $D_1 \subseteq V$, $D_2 \subset S$ tal que $D'=D_1\cup D_2$, entonces 
\begin{enumerate}
\item[Caso 1:] $|D_1|>n-2$.

Si $|D_1|=n-1$, $\partial(D') \leq \frac{n(n-1)}{2}+2-n-2|D_2|$. Para $|D_2|=0$, $\partial(D')=\frac{n(n-1)}{2}+1-n<\partial(D)$. Para $|D_2|\geq 1$, $\partial(D')\leq \frac{n(n-1)}{2}+2-n-2|D_2|\leq \frac{n(n-1)}{2}-n< \partial(D).$

Si $|D_1|=n$, $\partial(D')\leq \frac{n(n-1)}{2}-n-2|D_2|$. Para $|D_2|=0$, $\partial(D')= \frac{n(n-1)}{2}-n<\partial(D)$. Para $|D_2|\geq 1$, $\partial(D')\leq \frac{n(n-1)}{2}-n-2|D_2|\leq \frac{n(n-1)}{2}-n-2<\partial(D)$.

\item[Caso 2:] $|D_1|<n-2$.

Si $|D_1|\leq n-3$, $\partial(D')\leq \frac{n(n-1)}{2}+6-n-2|D_2|$. Para $|D_2|=0$, $\partial(D')\leq \frac{n(n-1)}{2}-n<\partial(D)$. Para $|D_2|= 1$, $\partial(D')\leq \frac{n(n-1)}{2}+1-n<\partial(D)$. Para $|D_2|=2$, $\partial(D')\leq \frac{n(n-1)}{2}+1-n<\partial(D)$. Para $|D_2|\geq 3$, $\partial(D')\leq \frac{n(n-1)}{2}+6-n-2|D_2|\leq\frac{n(n-1)}{2}+6-n-6<\partial(D)$.

\item[Caso 3:] $|D_1|=n-2$, $\partial(D')\leq \frac{n(n-1)}{2}+4-n-2|D_2|$. Si $|D_2|=0$, $\partial(D')=\frac{n(n-1)}{2}+1-n<\partial(D)$. Si $|D_2|=1$, $\partial(D')=\partial(D)$. Si $|D_2|\geq 2$, $\partial(D')\leq\frac{n(n-1)}{2}+4-n-2|D_2|\leq\frac{n(n-1)}{2}+4-n-4<\partial(D)$.
\end{enumerate}
Por lo tanto $\partial(D)\geq \partial(D')$ y as\'i $\partial(\S{G})=\partial(D)$.
\end{enumerate}
\end{proof}

\begin{figure}
\begin{center}
\includegraphics[width=6cm]{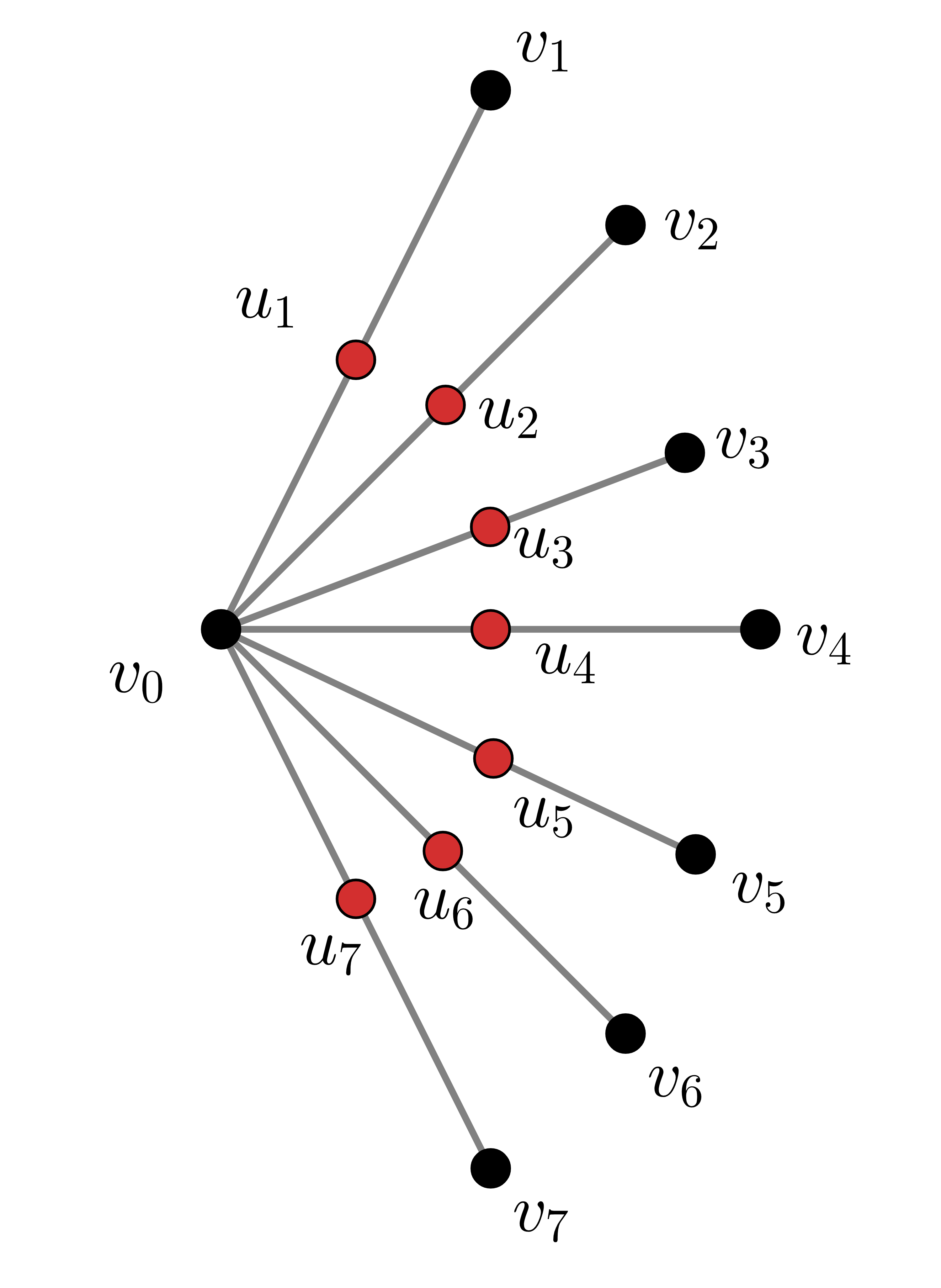} 
\end{center}
\caption{Para $\S{S_8}$, si $X=\lbrace u_1,u_2,u_3 \rbrace$, $Y=\lbrace v_6,v_7 \rbrace$ y $Z=\lbrace u_4,u_5,v_4,v_5 \rbrace$ tal que $D=X\cup Y\cup Z$, entonces  $x=3$, $y=2$ y $z=4$.}
\label{Fig:S(S_n)}
\end{figure}

\section{Acknowledgements}

%Bibliography


\begin{thebibliography}{1}


\bibitem{BaBeLeSi}
L. A. Basilio, S. Bermudo, J. Lea\~nos and  J. M. Sigarreta. \emph{$\beta$ -Differential of a Graph}. Symmetry, \textbf{9(10)}, 205,  2017.

\bibitem{BaBeSi}
L. A. Basilio, S. Bermudo and J. M. Sigarreta. \emph{Bounds on the
differential of a graph}. Utilitas Mathematica, \textbf{103}, 319--334, 2017.

\bibitem{Be} S. Bermudo. \emph{On the Differential and Roman domination number of a
graph with minimum degree two}. Discrete Applied Mathematics, \textbf{232}, 64--72, 2017.

\bibitem{BeDeMaSi}
S. Bermudo, L. De la Torre, A. M. Mart\'in-Caraballo and J. M.
Sigarreta. \emph{The differential of the strong product graphs}.
International Journal of Computer Mathematics, \textbf{92} (6),
1124--1134, 2015.

\bibitem{BeFe3}
S. Bermudo and H. Fernau. \emph{Combinatorics for smaller kernels:
the differential of a graph}. Theoretical Computer Science,
\textbf{562}, 330--345, 2015.

\bibitem{BeFe2}
S. Bermudo and H. Fernau. \emph{Computing the differential of a
graph: hardness, approximability and exact algorithms}. Discrete
Applied Mathematics, \textbf{165}, 69--82, 2014.

\bibitem{BeFe}
S. Bermudo and H. Fernau. \emph{Lower bound on the differential of a
graph}. Discrete Mathematics, \textbf{312(22)}, 3236--3250, 2012.


\bibitem{BeFeSi}
S. Bermudo S, H. Fernau and J. M. Sigarreta. \emph{The differential
and the Roman domination number of a graph}. Applicable Analysis and
Discrete Mathematics, \textbf{8}, 155--171, 2014.

\bibitem{BeRoSi}
S. Bermudo, J. M. Rodr\'iguez and J. M. Sigarreta. \emph{On the
differential in graphs}. Utilitas Mathematica, \textbf{97},
257--270, 2015.

\bibitem{CvDoSa}
D. M. Cvetkoci\'c, M. Doob and H. Sachs, \emph{Spectra of Graphs, Theory and Application},
Academic Press, New York, 1980.


%\bibitem{ChKiPrWe}
%E. W. Chambers, B. Kinnersley, N. Prince, and D. B. West,
%\emph{Extremal problems for Roman domination}, SIAM Journal of
%Discrete Mathematics \textbf{23}, 1575-1586, 2009.

%\bibitem{CoDrHeHe}
%E. J. Cockayne, P. A. Dreyer Jr., S. M. Hedetniemi, and S. T.
%Hedetniemi, \emph{Roman domination in graphs}, Discrete Mathematics
%\textbf{278} (2004), 11-22, 2004.

%\bibitem{DoRi}
%P. Domingos and M.Richardson, \emph{Mining the network value of
%customers}, in:  Proceedings of the Seventh International Conference
%on Knowledge Discovery and Data Mining, KDD, 57--66, 2001.

%\bibitem{FaKaKhSh}
%O. Favaron, H. Karami, R. Khoeilar, and S. M. Sheikholeslami,
%\emph{On the Roman domination number of a graph}, Discrete
%Mathematics \textbf{309}, 3447-3451, 2009.

%\bibitem{DaRaVo}
%Dahme, F., Rautenbach, D., and Volkmann, L. \emph{Some remarks on $\alpha$-domination}. Discussiones Mathematicae Graph Theory, 24(3), 423-430, 2004.

%\bibitem{GaJeLi}
%G. S. Domke, J. E. Dumbar, and L. R. Markus.\emph{Gallai-type
%theorems and domination parameters}. Discrete Mathematics,
%\textbf{167- 168}, 237--248, 1997.



\bibitem{Galai}
T. Gallai, \emph{$\ddot{U}$ber extreme Punkt-und Kantenmengen},
 Ann. Univ. Sci. Budapest, E$\ddot{o}$tv$\ddot{o}$s Sect. Math. \textbf{2}, 133-138, 1959.



\bibitem{GoHe}
W. Goddard and M. A. Henning. \emph{Generalised domination and
independence in graphs}, Congressus Numerantium, \textbf{123},
161--172, 1997.

%\bibitem{HaVo}
%A. Hansberg and L. Volkmann, \emph{Upper bounds on the k-domination
%number and the k-Roman domination number}, Discrete Applied
%Mathematics \textbf{157}, 1634-1639, 2009.

%\bibitem{He}
%M. A. Henning, \emph{Defending the Roman Empire from multiple
%attacks}, Discrete Mathematics \textbf{271}, 101-115, 2003.

%\bibitem{ImKl} W. Imrich and S. Klavzar. \emph{Product Graphs: structure and recognition.}
%Wiley-Interscience, New York, USA, 2000.

%\bibitem{Ka}
%A. Kaneko, A. Kelmans and T. Nishimura. \emph{On packing 3-vertex
%paths in a graph}, J. Graph Theory, 36 n 4, 175-197, 2001.

\bibitem{KeKlTa1}
D. Kempe, J. Kleinberg and E. Tardos. \emph{Maximizing the spread of
influence through a social network}, KDD'03: Proceedings of the
Ninth ACM SIGKDD international conference on Knowledge Discovery and
Data Mining, New York, USA , 137--146, 2003.

\bibitem{KeKlTa2}
D. Kempe, J. Kleinberg and E. Tardos. \emph{Influential nodes in a
diffusion model for social networks}, ICALP (Springer-Verlag),
1127-1138, 2005.


%\bibitem{HW}
%W. H. Mills and R. C. Mullin. \emph{Coverings and packings}, in:
%Contemporary Design Theory: A collection of Surveys, 371-399,
%edited by J. H. Dinitz and D. R. Stinson. Wiley, 1992.

%\bibitem{LiCh1}
%C. H. Liu and G. J. Chang, \emph{Roman domination on 2-connected
%graphs}, SIAM Journal of Discrete Mathematics \textbf{26}, 193-205,
%2012.

%\bibitem{LiCh2}
%C. H. Liu and G. J. Chang, \emph{Upper bounds on Roman domination
%numbers of graphs}, Discrete Mathematics \textbf{312}, 1386-1391,
%2012.

%\bibitem{LiCh3}
%Liu C. H.,  Chang G. J.: \emph{Roman domination on strongly chordal
%graphs}. Journal of Combinatorial Optimization \textbf{26} (3),
%608-619, 2013.

%\bibitem{T1}
%T. W. Haynes, S. Hedetniemi  and P. J. Slater. \emph{Fundamentals of
%Domination in Graphs}, Marcel Dekker, 1998.

%\bibitem{T2}
%T. W. Haynes, S. Hedetniemi  and P. J. Slater. \emph{Domination in
%Graphs: Advanced Topics}, Marcel Dekker, 1998

%\bibitem{lw}
%J. R. Lewis. \emph{Differentials of graphs}, Master's Thesis, East
%Tennessee State University, 2004.

\bibitem{MaHaHeHeSl}
J. L. Mashburn, T. W. Haynes, S. M. Hedetniemi, S. T. Hedetniemi and
P. J. Slater. \emph{Differentials in graphs}, Utilitas Mathematica,
\textbf{69}, 43--54, 2006.

%\bibitem{ReRo}
%C. S. ReVelle and K. E. Rosing, \emph{Defendens imperium Romanum: A
%classical problem in military strategy}, American Mathematical
%Monthly \textbf{107} , 585-594, 2000.

\bibitem{PJS}
P. J. Slater. \emph{Enclaveless sets and MK-systems,} J. Res. Nat Bur. Standars, \textbf{82}(3), 197-202, 1997.

\bibitem{RoYo}
P. R. L. Pushpam and D. Yokesh. \emph{Differential in certain
classes of graphs}, Tamkang Journal of Mathematics, \textbf{41} (2),
129--138, 2010.

%\bibitem{RuSl}
%R. R. Rubalcaba and P. J. Slater, \emph{Roman dominating influence
%parameters}, Discrete Mathematics \textbf{307}, 3194- 3200, 2007.

\bibitem{Si}
J. M. Sigarreta. \emph{Differential in Cartesian Product Graphs},
Ars  Combinatoria, \textbf{126}, 259--267, 2016.

\bibitem{Sl1}
P. J. Slater. \emph{Enclaveless sets and MK-systems}, Journal of
Research of the National Bureau of Standards, \textbf{82} (3),
197--202, 1977.

%\bibitem{Sl}
%P. J. Slater. \emph{Locating dominating sets and locating-dominating
%sets,} in: Graph theory, combi- natorics, and algorithms, Vol. 1, 2
%(Kalamazoo, MI, 1992), Wiley-Intersci. Publ., pages 1073-1079.
%Wiley, New York, 1995.

%\bibitem{St}
%I. Stewart. \emph{Defend the Roman Empire}, Scientific American,
%136-139, 1999.


\bibitem{Zh}
C. Q. Zhang. \emph{Finding critical independent sets and critical
vertex subsets are polynomial problems}, SIAM Journal on Discrete
Mathematics, \textbf{3} (3), 431--438, 1990.

\end{thebibliography}
\end{document}